\documentclass[final,3p]{elsarticle}
\usepackage{lineno,hyperref}
\usepackage{amsmath}
\usepackage{amssymb}
\usepackage[linesnumbered,ruled,vlined]{algorithm2e}
\usepackage{caption}
\usepackage{mathtools}
\usepackage{changes}
\usepackage{multirow}
\usepackage{verbatim}
\usepackage{mathrsfs}
\usepackage{graphicx}
\usepackage{subcaption}
\usepackage{amsmath,amsthm,bm,mathrsfs}
\theoremstyle{plain}% Theorem-like structures provided by amsthm.sty
\newtheorem{theorem}{Theorem}[section]

\newtheorem{proposition}[theorem]{Proposition}

\theoremstyle{definition}

\theoremstyle{remark}
\newtheorem{remark}{Remark}

\modulolinenumbers[5]

\journal{ArXiv.org}

\begin{document}

\begin{frontmatter}

\title{A Low-rank ADI Algorithm for Solving Large-scale Non-symmetric Algebraic Riccati Equations}

%% Group authors per affiliation:
\author[uz]{Umair~Zulfiqar\corref{mycorrespondingauthor}}
\cortext[mycorrespondingauthor]{Corresponding author}
\ead{umair@yangtzeu.edu.cn}
\address[uz]{School of Electronic Information and Electrical Engineering, Yangtze University, Jingzhou, Hubei, 434023, China}
\begin{abstract}
This paper considers large-scale nonsymmetric continuous-time algebraic Riccati equations (NAREs) that admit low-rank solutions. Low-rank alternating direction implicit (ADI) methods have proven to be an efficient approach for solving several matrix equations, including Lyapunov equations, Sylvester equations, and symmetric Riccati equations. Although a low-rank algorithm for the Sylvester equation has been used as an inner loop in computing low-rank solutions of NAREs, no low-rank ADI algorithm currently exists for NAREs themselves. This paper fills this gap by developing a low-rank ADI algorithm for large-scale NAREs that admit a low-rank solution. Since Lyapunov equations, Sylvester equations, and symmetric Riccati equations are special cases of the NARE, the existing low-rank ADI methods in the literature are special cases of the more general low-rank ADI method proposed here. An automatic and computationally efficient method for shift generation is also discussed, and a subspace-accelerated projection approach is presented to generate shifts for subsequent iterations without user intervention. Once initialized with arbitrary shifts, the proposed algorithm solves large-scale NAREs autonomously, generating its own shifts. Numerical results are presented using benchmark example of order $10^6$, demonstrating the computational efficiency and accuracy of the proposed algorithm.
\end{abstract}

\begin{keyword}
ADI\sep Low-rank\sep Non-symmetric\sep Projection\sep Rational interpolation\sep Riccati equation
\end{keyword}

\end{frontmatter}

%\linenumbers
\section{Introduction}
This paper considers the following nonsymmetric continuous-time algebraic Riccati equation (NARE). Let $X \in \mathbb{R}^{n \times \hat{n}}$ be a solution to
\begin{align}
A X \hat{E}+ E X \hat{A}-EX\hat{B}CX\hat{E}+B\hat{C}=0, \label{nare}
\end{align}
where $E, A \in \mathbb{R}^{n \times n}$, $B \in \mathbb{R}^{n \times m}$, $C \in \mathbb{R}^{p \times n}$, $\hat{E}, \hat{A} \in \mathbb{R}^{\hat{n} \times \hat{n}}$, $\hat{B} \in \mathbb{R}^{\hat{n} \times p}$, and $\hat{C} \in \mathbb{R}^{m \times \hat{n}}$. The dimensions $n$ and $\hat{n}$ are assumed to be large, and the matrices $A$, $E$, $\hat{A}$, and $\hat{E}$ are sparse.

Define the closed-loop matrices and associated gains as
\begin{align}
A_{\mathrm{cl}} &= E^{-1}\big(A-K_{\mathrm{gain}}C\big), \qquad K_{\mathrm{gain}} = EX\hat{B},\nonumber\\
\hat{A}_{\mathrm{cl}} &= \big(\hat{A}-\hat{B}\hat{K}_{\mathrm{gain}}\big)\hat{E}^{-1}, \qquad \hat{K}_{\mathrm{gain}} = CX\hat{E}.\nonumber
\end{align}

For a stabilizing solution $X$ of \eqref{nare}, the gains $K_{\mathrm{gain}}$ and $\hat{K}_{\mathrm{gain}}$ ensure that $A_{\mathrm{cl}}$ and $\hat{A}_{\mathrm{cl}}$ are Hurwitz; that is, all their eigenvalues lie in the open left half of the complex plane \cite{freiling2002survey,abou2003non,bini2008fast}.

Now define the matrix
\[
\mathcal{K}=\begin{bmatrix}E^{-1}A & E^{-1}B\hat{C}\hat{E}^{-1} \\ \hat{B}C & -\hat{A}\hat{E}^{-1}\end{bmatrix}.
\]
The stabilizing solution of \eqref{nare} satisfies
\[
\lambda_i(\mathcal{K}) = \lambda_i(A_{\mathrm{cl}}) \cup \lambda_i(-\hat{A}_{\mathrm{cl}}),
\]
where $\lambda_i(\cdot)$ denotes the eigenvalues of a matrix.

In the typical setting where $p \ll n$ and $m \ll n$, the solution $X$ is often of low numerical rank. This property enables accurate low-rank approximations when $n$ is large, as storing or computing the full matrix $X$ directly would be computationally prohibitive \cite{li2013solving,benner2016low,weng2021solving}. The present work focuses on computing the stabilizing low-rank solution of \eqref{nare}. Nevertheless, the proposed algorithm is general enough to produce other solutions as well if a stabilizing solution is not required.
\section{Main Work}
In this section, we exploit the Petrov–Galerkin projection-based interpolatory nature of the alternating direction implicit (ADI) methods to develop a low-rank approach for NAREs. We first establish the theoretical foundation of the proposed method, followed by a discussion of its algorithmic aspects. Two implementations are presented: one that utilizes the Sherman–Morrison–Woodbury (SMW) formula \cite{golub2013matrix} and one that avoids it. The automatic generation of shifts is also addressed.
\subsection{The Low-rank ADI Approach for NAREs}
Let \(\{\alpha_i\}_{i=1}^{k} \subset \mathbb{C}_{-}\) and \(\{\beta_i\}_{i=1}^{k} \subset \mathbb{C}_{-}\) be two sets of ADI shifts used to obtain a low-rank approximation \(X \approx \tilde{X} = V^{(i)} \bar{X}^{(i)} \big(\hat{W}^{(i)}\big)^\top\), where the matrices \(V^{(i)}\) and \(\hat{W}^{(i)}\) satisfy the properties
\begin{align}
\underset{i=1,\dots,k}{\text{span}}\left\{(-\alpha_i E - A)^{-1} B\right\} &\subset \mathrm{Ran}\big(V^{(i)}\big), \label{int_prop1}\\
\underset{i=1,\dots,k}{\text{span}}\left\{(-\overline{\beta}_i \hat{E}^\top - \hat{A}^\top)^{-1} \hat{C}^\top\right\} &\subset \mathrm{Ran}\big(\hat{W}^{(i)}\big). \label{int_prop2}
\end{align}
Define the residual \(R_x^{(i)}\) for the approximation \(X \approx \tilde{X}\) as
\[
R_x^{(i)} = A \tilde{X} \hat{E}+ E \tilde{X} \hat{A}-E\tilde{X}\hat{B}C\tilde{X}\hat{E}+B\hat{C}.
\]
Then, for some generally unknown matrices \(W^{(i)}\) and \(\hat{V}^{(i)}\) satisfying \((W^{(i)})^\top E V^{(i)} = I\) and \((\hat{W}^{(i)})^\top \hat{E} \hat{V}^{(i)} = I\), the residual obtained from the general low-rank ADI method satisfies the Petrov–Galerkin projection condition
\[
(W^{(i)})^\top R_x^{(i)} \hat{V}^{(i)} = 0.
\]

Assume that the shifts \(\alpha_i\) and \(\beta_i\) are grouped according to the following four cases, consistent with the grouping used in the ADI method for Sylvester equations \cite{benner2014computing}:
\begin{enumerate}
\item Case I: \(\mathrm{Im}(\alpha_i)=0\) and \(\mathrm{Im}(\beta_i)=0\).
\item Case II: \(\mathrm{Im}(\alpha_i)\neq 0\), \(\alpha_{i+1}=\overline{\alpha}_i\), \(\mathrm{Im}(\beta_i)\neq0\), and \(\beta_{i+1}=\overline{\beta}_i\).
\item Case III: \(\mathrm{Im}(\alpha_i)\neq 0\), \(\alpha_{i+1}=\overline{\alpha}_i\), \(\mathrm{Im}(\beta_i)=0\), and \(\mathrm{Im}(\beta_{i+1})=0\).
\item Case IV: \(\mathrm{Im}(\alpha_i)=0\), \(\mathrm{Im}(\alpha_{i+1})=0\), \(\mathrm{Im}(\beta_i)\neq0\), and \(\beta_{i+1}=\overline{\beta}_i\).
\end{enumerate}

Corresponding to this grouping, define the matrices \(s_v^{(i)}\), \(l_v^{(i)}\), \(s_w^{(i)}\), and \(l_w^{(i)}\) as follows:
\begin{enumerate}
\item Case I: 
\[
s_v^{(i)} = -\alpha_i I_m,\quad l_v^{(i)} = -I_m,\quad s_w^{(i)} = -\beta_i I_m,\quad l_w^{(i)} = -I_m.
\]
\item Case II: 
\begin{align}
s_v^{(i)} &= \begin{bmatrix}
-\mathrm{Re}(\alpha_i)I_m & -\mathrm{Im}(\alpha_i)I_m \\
\mathrm{Im}(\alpha_i)I_m & -\mathrm{Re}(\alpha_i)I_m
\end{bmatrix},&
l_v^{(i)} &= \begin{bmatrix} -I_m & 0 \end{bmatrix},\nonumber\\
s_w^{(i)} &= \begin{bmatrix}
-\mathrm{Re}(\beta_i)I_m & -\mathrm{Im}(\beta_i)I_m \\
\mathrm{Im}(\beta_i)I_m & -\mathrm{Re}(\beta_i)I_m
\end{bmatrix},&
l_w^{(i)} &= \begin{bmatrix} -I_m & 0 \end{bmatrix}.\nonumber
\end{align}
\item Case III: 
\begin{align}
s_v^{(i)} &= \begin{bmatrix}
-\mathrm{Re}(\alpha_i)I_m & -\mathrm{Im}(\alpha_i)I_m \\
\mathrm{Im}(\alpha_i)I_m & -\mathrm{Re}(\alpha_i)I_m
\end{bmatrix},&
l_v^{(i)} &= \begin{bmatrix} -I_m & 0 \end{bmatrix},\nonumber\\
s_w^{(i)} &= \begin{bmatrix}
-\beta_i I_m & I_m \\
0 & -\beta_{i+1} I_m
\end{bmatrix},&
l_w^{(i)} &= \begin{bmatrix} -I_m & 0 \end{bmatrix}.\nonumber
\end{align}
\item Case IV: 
\begin{align}
s_v^{(i)} &= \begin{bmatrix}
-\alpha_i I_m & I_m \\
0 & -\alpha_{i+1} I_m
\end{bmatrix},& 
l_v^{(i)} &= \begin{bmatrix} -I_m & 0 \end{bmatrix},\nonumber\\
s_w^{(i)} &= \begin{bmatrix}
-\mathrm{Re}(\beta_i)I_m & -\mathrm{Im}(\beta_i)I_m \\
\mathrm{Im}(\beta_i)I_m & -\mathrm{Re}(\beta_i)I_m
\end{bmatrix},&
l_w^{(i)} &= \begin{bmatrix} -I_m & 0 \end{bmatrix}.\nonumber
\end{align}
\end{enumerate}

Define the matrices \(S_v^{(i)}\), \(L_v^{(i)}\), \(S_w^{(i)}\), \(L_w^{(i)}\), \(V^{(i)}\), \(\hat{W}^{(i)}\), \(B_{\perp}^{(i)}\), \(\hat{C}_{\perp}^{(i)}\), and \(\bar{X}^{(i)}\) as follows:
\begin{align}
S_v^{(i)} &= \begin{bmatrix}
S_v^{(i-1)} & \bar{X}^{(i-1)}\Big((L_w^{(i-1)})^\top l_v^{(i)} + (\hat{W}^{(i-1)})^\top \hat{B} C v_i\Big) \\
0 & s_v^{(i)}
\end{bmatrix}, \quad 
L_v^{(i)} = \begin{bmatrix} L_v^{(i-1)} & l_v^{(i)} \end{bmatrix}, \nonumber \\[4pt]
S_w^{(i)} &= \begin{bmatrix}
S_w^{(i-1)} & (\bar{X}^{(i-1)})^\top\Big((L_v^{(i-1)})^\top l_w^{(i)} + (V^{(i-1)})^\top C^\top \hat{B}^\top w_i\Big) \\
0 & s_w^{(i)}
\end{bmatrix}, \quad 
L_w^{(i)} = \begin{bmatrix} L_w^{(i-1)} & l_w^{(i)} \end{bmatrix}, \nonumber \\[4pt]
V^{(i)} &= \begin{bmatrix} V^{(i-1)} & v_i \end{bmatrix}, \quad 
\hat{W}^{(i)} = \begin{bmatrix} \hat{W}^{(i-1)} & w_i \end{bmatrix}, \nonumber \\[4pt]
B_{\perp}^{(i)} &= B - E V^{(i)} \bar{X}^{(i)} (L_w^{(i)})^\top = B_{\perp}^{(i-1)} - E v_i x_i (l_w^{(i)})^\top, \nonumber \\[4pt]
\hat{C}_{\perp}^{(i)} &= \hat{C} - L_v^{(i)} \bar{X}^{(i)} (\hat{W}^{(i)})^\top \hat{E} = \hat{C}_{\perp}^{(i-1)} - l_v^{(i)} x_i (w_i)^\top \hat{E}, \nonumber \\[4pt]
\bar{X}^{(i)} &= \mathrm{blkdiag}\big( \bar{X}^{(i-1)},\, x_i \big),
\label{SvLv}
\end{align}
where \(v_i\), \(w_i\), and \(x_i\) satisfy the following Sylvester equations:
\begin{align}
\Big(A - E V^{(i-1)} \bar{X}^{(i-1)} (\hat{W}^{(i-1)})^\top \hat{B} C\Big) v_i - E v_i s_v^{(i)} + B_{\perp}^{(i-1)} l_v^{(i)} &= 0, \label{eq:vi} \\[4pt]
\Big(\hat{A}^\top - \hat{E}^\top \hat{W}^{(i-1)} (\bar{X}^{(i-1)})^\top (V^{(i-1)})^\top C^\top \hat{B}^\top \Big) w_i - \hat{E}^\top w_i s_w^{(i)} + \hat{C}_{\perp}^{(i-1)} l_w^{(i)} &= 0, \label{eq:wi} \\[4pt]
-(s_w^{(i)})^\top x_i^{-1} - x_i^{-1} s_v^{(i)} + (l_w^{(i)})^\top l_v^{(i)} + w_i^\top \hat{B} C v_i &= 0. \label{eq:xi}
\end{align}

From these definitions, it can be readily verified that \(V^{(i)}\), \(\hat{W}^{(i)}\), and \((\bar{X}^{(i)})^{-1}\) satisfy the following Sylvester equations:
\begin{align}
A V^{(i)} - E V^{(i)} S_v^{(i)} + B L_v^{(i)} &= 0, \label{Sylv_V} \\[4pt]
\hat{A}^\top \hat{W}^{(i)} - \hat{E}^\top \hat{W}^{(i)} S_w^{(i)} + \hat{C}^\top L_w^{(i)} &= 0, \label{Sylv_W} \\[4pt]
-(S_w^{(i)})^\top (\bar{X}^{(i)})^{-1} - (\bar{X}^{(i)})^{-1} S_v^{(i)} + (L_w^{(i)})^\top L_v^{(i)} + (\hat{W}^{(i)})^\top \hat{B} C V^{(i)} &= 0. \label{Sylv_X}
\end{align}
Due to the connection between Sylvester equations and rational Krylov subspaces established in \cite{gallivan2004sylvester}, the matrices \(V^{(i)}\) and \(\hat{W}^{(i)}\) satisfy the properties \eqref{int_prop1} and \eqref{int_prop2}.

Let us define \(G(s)\) and \(\hat{G}(s)\) as
\[
G(s) = C (sE - A)^{-1} B \qquad \text{and} \qquad \hat{G}(s) = \hat{C} (s\hat{E} - \hat{A})^{-1} \hat{B}.
\]
Construct the following \(im\)-th order reduced-order models of \(G(s)\) and \(\hat{G}(s)\):
\[
G_r^{(i)}(s) = C_r^{(i)} (sI - A_r^{(i)})^{-1} B_r^{(i)} \qquad \text{and} \qquad \hat{G}_r^{(i)}(s) = \hat{C}_r^{(i)} (sI - \hat{A}_r^{(i)})^{-1} \hat{B}_r^{(i)},
\]
where
\begin{align}
(W^{(i)})^\top E V^{(i)} &= I, \quad A_r^{(i)} = (W^{(i)})^\top A V^{(i)}, \quad B_r^{(i)} = (W^{(i)})^\top B, \quad C_r^{(i)} = C V^{(i)}, \nonumber \\
(\hat{W}^{(i)})^\top \hat{E} \hat{V}^{(i)} &= I, \quad \hat{A}_r^{(i)} = (\hat{W}^{(i)})^\top \hat{A} \hat{V}^{(i)}, \quad \hat{B}_r^{(i)} = (\hat{W}^{(i)})^\top \hat{B}, \quad \hat{C}_r^{(i)} = \hat{C} \hat{V}^{(i)}. \nonumber
\end{align}
By pre-multiplying \eqref{Sylv_V} by \((W^{(i)})^\top\) and post-multiplying \eqref{Sylv_W} by \((\hat{V}^{(i)})^\top\), we readily obtain
\[
A_r^{(i)} = S_v^{(i)} - B_r^{(i)} L_v^{(i)} \qquad \text{and} \qquad \hat{A}_r^{(i)} = (S_w^{(i)})^\top - (L_w^{(i)})^\top \hat{C}_r^{(i)}.
\]
Note that due to the block upper-triangular structure of \(S_v^{(i)}\) and \(S_w^{(i)}\), their eigenvalues are, respectively, \((-\alpha_1, \dots, -\alpha_i)\) and \((-\beta_1, \dots, -\beta_i)\), each with multiplicity \(m\). Owing to the connection between Sylvester equations and rational interpolation established in \cite{gallivan2004sylvester}, the following interpolation conditions hold:
\[
G(-\alpha_i) = G_r^{(i)}(-\alpha_i) \quad \text{and} \quad \hat{G}(-\overline{\beta}_i) = \hat{G}_r^{(i)}(-\overline{\beta}_i), \qquad i = 1, \dots, k.
\]
Furthermore, these interpolation conditions are independent of the specific choices of \(W^{(i)}\) and \(\hat{V}^{(i)}\); consequently, the parameters \(B_r^{(i)}\) and \(\hat{C}_r^{(i)}\) can be chosen arbitrarily without affecting the interpolation conditions. For further details, see \cite{wolfthesis,panzerthesis,astolfi2010model}.

The following theorem provides specific choices for \(B_r^{(i)}\) and \(\hat{C}_r^{(i)}\) that ensure \(\bar{X}^{(i)}\) is a stabilizing solution to the projected NARE
\begin{align}
A_r^{(i)} \bar{X}^{(i)} + \bar{X}^{(i)} \hat{A}_r^{(i)} - \bar{X}^{(i)} \hat{B}_r^{(i)} C_r^{(i)} \bar{X}^{(i)} + B_r^{(i)} \hat{C}_r^{(i)} = 0. \label{proj_nare}
\end{align}

\begin{theorem}\label{th1}
Let \(\{\alpha_i\}_{i=1}^{k}\subset\mathbb{C}_{-}\) and \(\{\beta_i\}_{i=1}^{k}\subset\mathbb{C}_{-}\) be given ADI shifts. Let \(V^{(i)}\) and \(\hat{W}^{(i)}\) solve the Sylvester equations \eqref{Sylv_V} and \eqref{Sylv_W}, respectively, with \(S_v^{(i)}\), \(L_v^{(i)}\), \(S_w^{(i)}\), \(L_w^{(i)}\), \(B_{\perp}^{(i)}\), \(\hat{C}_{\perp}^{(i)}\), and \(\bar{X}^{(i)}\) defined in \eqref{SvLv}. Assume that there exist matrices \(W^{(i)}\) and \(\hat{V}^{(i)}\) satisfying \((W^{(i)})^\top E V^{(i)} = I\) and \((\hat{W}^{(i)})^\top \hat{E} \hat{V}^{(i)} = I\), such that \(A_r^{(i)} = S_v^{(i)} - B_r^{(i)} L_v^{(i)}\) and \(\hat{A}_r^{(i)} = (S_w^{(i)})^\top - (L_w^{(i)})^\top \hat{C}_r^{(i)}\). When the free parameters \(B_r^{(i)}\) and \(\hat{C}_r^{(i)}\) are set to
\begin{align}
B_r^{(i)} &= \bar{X}^{(i)} (L_w^{(i)})^\top = 
\begin{bmatrix} b_1 \\ \vdots \\ b_i \end{bmatrix} = 
\begin{bmatrix} x_1 (l_w^{(1)})^\top \\ \vdots \\ x_i (l_w^{(i)})^\top \end{bmatrix}, \\[4pt]
\hat{C}_r^{(i)} &= L_v^{(i)} \bar{X}^{(i)} = 
\begin{bmatrix} \hat{c}_1 & \cdots & \hat{c}_i \end{bmatrix} = 
\begin{bmatrix} l_v^{(1)} x_1 & \cdots & l_v^{(i)} x_i \end{bmatrix},
\end{align}
the following statements hold:
\begin{enumerate}
    \item \(\bar{X}^{(i)}\) is a stabilizing solution to the projected NARE \eqref{proj_nare}.
    
    \item The residual \(R_x^{(i)}\) for the approximation \(X \approx V^{(i)} \bar{X}^{(i)} (\hat{W}^{(i)})^\top\) satisfies the Petrov–Galerkin projection condition \((W^{(i)})^\top R_x^{(i)} \hat{V}^{(i)} = 0\) and is given by 
    \[
    R_x^{(i)} = B_{\perp}^{(i)} \hat{C}_{\perp}^{(i)}.
    \]
    
    \item The gain matrices \(K_{\mathrm{gain}}\) and \(\hat{K}_{\mathrm{gain}}\) can be approximated recursively as
    \[
    K_{\mathrm{gain}} \approx \tilde{K}^{(i)} = \tilde{K}^{(i-1)} + E v_i x_i w_i^\top \hat{B}, \qquad
    \hat{K}_{\mathrm{gain}} \approx \bar{K}^{(i)} = \bar{K}^{(i-1)} + C v_i x_i w_i^\top \hat{E}.
    \]
    
    \item The matrices \(V^{(i)}\) and \(\hat{W}^{(i)}\) satisfy the Sylvester equations
    \begin{align}
    A V^{(i)} - E V^{(i)} A_r^{(i)} + B_{\perp}^{(i)} L_v^{(i)} &= 0, \label{Vb_Sylv} \\
    \hat{A}^\top \hat{W}^{(i)} - \hat{E}^\top \hat{W}^{(i)} (\hat{A}_r^{(i)})^\top + (\hat{C}_{\perp}^{(i)})^\top L_w^{(i)} &= 0. \label{Wc_Sylv}
    \end{align}
\end{enumerate}
\end{theorem}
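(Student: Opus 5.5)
The whole proof will be algebraic manipulation of the three global Sylvester equations \eqref{Sylv_V}, \eqref{Sylv_W}, \eqref{Sylv_X}, using the specific choices $B_r^{(i)}=\bar{X}^{(i)}(L_w^{(i)})^\top$ and $\hat{C}_r^{(i)}=L_v^{(i)}\bar{X}^{(i)}$. I will prove item 4 first, because items 1 and 2 follow from it.

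\textbf{Item 4.} Substitute $A_r^{(i)}=S_v^{(i)}-\bar{X}^{(i)}(L_w^{(i)})^\top L_v^{(i)}$ into the left-hand side of \eqref{Vb_Sylv}. This gives
\[
AV^{(i)}-EV^{(i)}S_v^{(i)}+EV^{(i)}\bar{X}^{(i)}(L_w^{(i)})^\top L_v^{(i)}+B_\perp^{(i)}L_v^{(i)}.
\]
Since $B_\perp^{(i)}=B-EV^{(i)}\bar{X}^{(i)}(L_w^{(i)})^\top$, the last two terms combine to $BL_v^{(i)}$. The expression then vanishes by \eqref{Sylv_V}. The identity \eqref{Wc_Sylv} follows in the same way from \eqref{Sylv_W} and the definition of $\hat{C}_\perp^{(i)}$.

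\textbf{Item 1.} Multiply \eqref{Sylv_X} on the left and on the right by $\bar{X}^{(i)}$. This gives
\[
-\bar{X}S_w^\top-S_v\bar{X}+\bar{X}L_w^\top L_v\bar{X}+\bar{X}\hat{W}^\top\hat{B}CV\bar{X}=0,
\]
with superscripts dropped. Note that $\hat{B}_r=\hat{W}^\top\hat{B}$ and $C_r=CV$. Substituting $A_r=S_v-\bar{X}L_w^\top L_v$ and $\hat{A}_r=S_w^\top-L_w^\top L_v\bar{X}$ into the left-hand side of \eqref{proj_nare} gives
\[
S_v\bar{X}+\bar{X}S_w^\top-\bar{X}L_w^\top L_v\bar{X}-\bar{X}\hat{B}_rC_r\bar{X}.
\]
Here the term $-2\bar{X}L_w^\top L_v\bar{X}$ from the two substitutions is partly cancelled by $+B_r\hat{C}_r=\bar{X}L_w^\top L_v\bar{X}$. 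The result is exactly the negative of the rearranged \eqref{Sylv_X}, so it is zero.

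For stabilizability I compute the closed-loop matrix
\[
A_r-\bar{X}\hat{B}_rC_r=S_v-\bar{X}\big(L_w^\top L_v+\hat{W}^\top\hat{B}CV\big).
\]
By \eqref{Sylv_X}, this equals $S_v-\bar{X}(S_w^\top\bar{X}^{-1}+\bar{X}^{-1}S_v)=-\bar{X}S_w^\top\bar{X}^{-1}$. Its eigenvalues are therefore $\{\beta_j\}$, which lie in $\mathbb{C}_-$. Similarly, $\hat{A}_r-\hat{B}_rC_r\bar{X}=-\bar{X}^{-1}S_v\bar{X}$ has eigenvalues $\{\alpha_j\}\subset\mathbb{C}_-$. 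Hence $\bar{X}^{(i)}$ is the stabilizing solution. This step requires $\bar{X}^{(i)}$ to be invertible, which is implicit in \eqref{eq:xi} defining $x_i^{-1}$; I will state it as an assumption.

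\textbf{Item 2.} Write $\tilde{X}=V\bar{X}\hat{W}^\top$ and expand $R_x$. Right-multiply \eqref{Vb_Sylv} by $\bar{X}\hat{W}^\top\hat{E}$. Then transpose \eqref{Wc_Sylv}, which gives $\hat{W}^\top\hat{A}-\hat{A}_r\hat{W}^\top\hat{E}+L_w^\top\hat{C}_\perp=0$, and left-multiply it by $EV\bar{X}$. Adding the two yields
\[
A\tilde{X}\hat{E}+E\tilde{X}\hat{A}=EV(A_r\bar{X}+\bar{X}\hat{A}_r)\hat{W}^\top\hat{E}-B_\perp L_v\bar{X}\hat{W}^\top\hat{E}-EV\bar{X}L_w^\top\hat{C}_\perp.
\]
I add the remaining terms of $R_x$ and use \eqref{proj_nare} to replace $A_r\bar{X}+\bar{X}\hat{A}_r-\bar{X}\hat{B}_rC_r\bar{X}$ by $-B_r\hat{C}_r$. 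Writing $P=EV\bar{X}L_w^\top$ and $Q=L_v\bar{X}\hat{W}^\top\hat{E}$, so that $B_\perp=B-P$ and $\hat{C}_\perp=\hat{C}-Q$, the residual becomes
\[
R_x=-PQ-B_\perp Q-P\hat{C}_\perp+B\hat{C}.
\]
Substituting $B=B_\perp+P$ and $\hat{C}=\hat{C}_\perp+Q$ collapses this to $B_\perp\hat{C}_\perp$. This bookkeeping is the main place errors could creep in, so I will track each term carefully.

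The Petrov–Galerkin condition then follows because $W^\top B_\perp=B_r-W^\top EV\bar{X}L_w^\top=B_r-\bar{X}L_w^\top=0$, and similarly $\hat{C}_\perp\hat{V}=0$.

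\textbf{Item 3.} This is immediate. The approximate gain is $E\tilde{X}\hat{B}=EV\bar{X}\hat{W}^\top\hat{B}$. Because $\bar{X}^{(i)}$ is block diagonal, this sum splits into $\sum_j Ev_jx_jw_j^\top\hat{B}$, which gives the stated recursion. The same argument with $C\tilde{X}\hat{E}$ gives the recursion for $\bar{K}^{(i)}$.
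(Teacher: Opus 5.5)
Your proof is correct. Items 1, 3 and 4 follow the paper's argument essentially step for step: you substitute $A_r^{(i)}=S_v^{(i)}-\bar X^{(i)}(L_w^{(i)})^\top L_v^{(i)}$, sandwich \eqref{Sylv_X} between copies of $\bar X^{(i)}$, read off the closed-loop matrices as similarity transforms of $-(S_w^{(i)})^\top$ and $-S_v^{(i)}$, and use the block-diagonal structure of $\bar X^{(i)}$ for the gains.

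The real difference is in Item 2. The paper first rewrites $B\hat C$ using the expansion of $B_\perp^{(i)}\hat C_\perp^{(i)}$. It then cancels the remaining terms directly with \eqref{Sylv_V}, the transpose of \eqref{Sylv_W}, and \eqref{Sylv_X}. As a result, its residual formula does not depend on Items 1 and 4.

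You instead establish \eqref{Vb_Sylv}--\eqref{Wc_Sylv} and the projected NARE first. Your computation then shows that, apart from $B_\perp^{(i)}\hat C_\perp^{(i)}$, the full residual is the projected NARE residual multiplied by $EV^{(i)}$ on the left and $(\hat W^{(i)})^\top\hat E$ on the right. So the formula holds precisely because $\bar X^{(i)}$ solves \eqref{proj_nare}. This is more modular and shows why the formula holds.

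Your $P,Q$ bookkeeping is also less error-prone. In the paper's grouped line, the $(L_w^{(i)})^\top L_v^{(i)}$ term appears with the wrong sign, although its conclusion is right.

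You also supply the argument for the Petrov--Galerkin condition, which the paper only asserts: $(W^{(i)})^\top B_\perp^{(i)}=B_r^{(i)}-\bar X^{(i)}(L_w^{(i)})^\top=0$ and $\hat C_\perp^{(i)}\hat V^{(i)}=\hat C_r^{(i)}-L_v^{(i)}\bar X^{(i)}=0$. This makes explicit that the choice of the free parameters is exactly what enforces the projection condition.

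Flagging the invertibility of $\bar X^{(i)}$ as an assumption is appropriate, since the paper uses $(\bar X^{(i)})^{-1}$ without stating it.
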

\begin{proof}
\textbf{1. Stabilizing solution property:}\\
Consider the following:
\begin{align}
& A_r^{(i)}\bar{X}^{(i)} + \bar{X}^{(i)}\hat{A}_r^{(i)} - \bar{X}^{(i)}\hat{B}_r^{(i)}C_r^{(i)}\bar{X}^{(i)} + B_r^{(i)}\hat{C}_r^{(i)} \nonumber \\
&= \big(S_v^{(i)} - \bar{X}^{(i)}(L_w^{(i)})^\top L_v^{(i)}\big)\bar{X}^{(i)} + \bar{X}^{(i)}\big((S_w^{(i)})^\top - (L_w^{(i)})^\top L_v^{(i)}\bar{X}^{(i)}\big) \nonumber \\
&\hspace*{4cm} - \bar{X}^{(i)}\hat{B}_r^{(i)}C_r^{(i)}\bar{X}^{(i)} + \bar{X}^{(i)}(L_w^{(i)})^\top L_v^{(i)}\bar{X}^{(i)} \nonumber \\
&= -\bar{X}^{(i)}\Big(-(S_w^{(i)})^\top (\bar{X}^{(i)})^{-1} - (\bar{X}^{(i)})^{-1} S_v^{(i)} + (L_w^{(i)})^\top L_v^{(i)} + (\hat{W}^{(i)})^\top \hat{B} C V^{(i)}\Big)\bar{X}^{(i)} \nonumber \\
&= 0.\nonumber
\end{align}
Pre-multiplying \eqref{Sylv_X} by \(\bar{X}^{(i)}\) yields
\[
A_r^{(i)} - \bar{X}^{(i)}\hat{B}_r^{(i)}C_r^{(i)} = -\bar{X}^{(i)}(S_w^{(i)})^\top (\bar{X}^{(i)})^{-1}.
\]
Similarly, post-multiplying \eqref{Sylv_X} by \(\bar{X}^{(i)}\) gives
\[
\hat{A}_r^{(i)} - \hat{B}_r^{(i)}C_r^{(i)}\bar{X}^{(i)} = -(\bar{X}^{(i)})^{-1} S_v^{(i)} \bar{X}^{(i)}.
\]
Since the ADI shifts \(\alpha_i\) and \(\beta_i\) have negative real parts, the matrices \(-S_v^{(i)}\) and \(-(S_w^{(i)})^\top\) are Hurwitz. Consequently, \(\bar{X}^{(i)}\) is a stabilizing solution to \eqref{proj_nare}.

\textbf{2. Residual and Petrov–Galerkin condition:}\\
Observe that
\begin{align}
B_{\perp}^{(i)}\hat{C}_{\perp}^{(i)} &= \Big(B - E V^{(i)} \bar{X}^{(i)} (L_w^{(i)})^\top\Big) \Big(\hat{C} - L_v^{(i)} \bar{X}^{(i)} (\hat{W}^{(i)})^\top \hat{E}\Big) \nonumber \\
&= B\hat{C} - B L_v^{(i)} \bar{X}^{(i)} (\hat{W}^{(i)})^\top \hat{E} - E V^{(i)} \bar{X}^{(i)} (L_w^{(i)})^\top \hat{C} \nonumber \\
&\hspace*{4cm} + E V^{(i)} \bar{X}^{(i)} (L_w^{(i)})^\top L_v^{(i)} \bar{X}^{(i)} (\hat{W}^{(i)})^\top \hat{E}.\nonumber
\end{align}
Hence,
\[
B\hat{C} = B_{\perp}^{(i)}\hat{C}_{\perp}^{(i)} + B L_v^{(i)} \bar{X}^{(i)} (\hat{W}^{(i)})^\top \hat{E} + E V^{(i)} \bar{X}^{(i)} (L_w^{(i)})^\top \hat{C} - E V^{(i)} \bar{X}^{(i)} (L_w^{(i)})^\top L_v^{(i)} \bar{X}^{(i)} (\hat{W}^{(i)})^\top \hat{E}.
\]

Now consider
\begin{align}
& A V^{(i)} \bar{X}^{(i)} (\hat{W}^{(i)})^\top \hat{E} + E V^{(i)} \bar{X}^{(i)} (\hat{W}^{(i)})^\top \hat{A} - E V^{(i)} \bar{X}^{(i)} (\hat{W}^{(i)})^\top \hat{B} C V^{(i)} \bar{X}^{(i)} (\hat{W}^{(i)})^\top \hat{E} \nonumber \\
&\quad + B_{\perp}^{(i)}\hat{C}_{\perp}^{(i)} + B L_v^{(i)} \bar{X}^{(i)} (\hat{W}^{(i)})^\top \hat{E} + E V^{(i)} \bar{X}^{(i)} (L_w^{(i)})^\top \hat{C} - E V^{(i)} \bar{X}^{(i)} (L_w^{(i)})^\top L_v^{(i)} \bar{X}^{(i)} (\hat{W}^{(i)})^\top \hat{E} \nonumber \\
&= \big(A V^{(i)} + B L_v^{(i)}\big) \bar{X}^{(i)} (\hat{W}^{(i)})^\top \hat{E} + E V^{(i)} \bar{X}^{(i)} \big((\hat{W}^{(i)})^\top \hat{A} + (L_w^{(i)})^\top \hat{C}\big) + B_{\perp}^{(i)}\hat{C}_{\perp}^{(i)} \nonumber \\
&\quad + E V^{(i)} \bar{X}^{(i)} \big(-(\hat{W}^{(i)})^\top \hat{B} C V^{(i)} + (L_w^{(i)})^\top L_v^{(i)}\big) \bar{X}^{(i)} (\hat{W}^{(i)})^\top \hat{E} \nonumber \\
&= B_{\perp}^{(i)}\hat{C}_{\perp}^{(i)}.\nonumber
\end{align}
Using the definitions of \(R_x^{(i)}\) and the identities above, one can verify that \((W^{(i)})^\top R_x^{(i)} \hat{V}^{(i)} = 0\) and that \(R_x^{(i)} = B_{\perp}^{(i)} \hat{C}_{\perp}^{(i)}\).

\textbf{3. Recursive gain approximations:}\\
Note that \(\tilde{K}^{(i)} = E V^{(i)} \bar{X}^{(i)} (\hat{W}^{(i)})^\top \hat{B}\) and \(\bar{K}^{(i)} = C V^{(i)} \bar{X}^{(i)} (\hat{W}^{(i)})^\top \hat{E}\). Given the block diagonal structure of \(\bar{X}^{(i)}\), we directly obtain
\[
\tilde{K}^{(i)} = \tilde{K}^{(i-1)} + E v_i x_i w_i^\top \hat{B}, \qquad
\bar{K}^{(i)} = \bar{K}^{(i-1)} + C v_i x_i w_i^\top \hat{E}.
\]

\textbf{4. Sylvester equations for \(V^{(i)}\) and \(\hat{W}^{(i)}\):}\\
Consider
\begin{align}
& A V^{(i)} - E V^{(i)} A_r^{(i)} + B_{\perp}^{(i)} L_v^{(i)} \nonumber \\
&= A V^{(i)} - E V^{(i)} \big(S_v^{(i)} - B_r^{(i)} L_v^{(i)}\big) + \big(B - E V^{(i)} B_r^{(i)}\big) L_v^{(i)} \nonumber \\
&= 0.\nonumber
\end{align}
This establishes \eqref{Vb_Sylv}. By duality, a similar argument shows that \(\hat{W}^{(i)}\) satisfies \eqref{Wc_Sylv}.

This completes the proof.
\end{proof}
\begin{remark}
If a stabilizing solution is not required, then one can drop the condition that the ADI shifts have negative real parts. As long as the sets $(\alpha_1,\cdots,\alpha_i)$ and $(-\beta_1,\cdots,-\beta_i)$ do not have any common elements, the solution to the Sylvester equation \eqref{Sylv_X} remains unique and the approximation $X \approx V^{(i)} \bar{X}^{(i)} (\hat{W}^{(i)})^\top$ remains valid.
\end{remark}
\subsection{Algorithm}
This subsection discusses the algorithmic aspects of the proposed low-rank solver for NAREs. The key benefit of low-rank ADI methods is that no matrix equations are solved explicitly. Instead, these methods involve shifted linear matrix solves and basic matrix operations to recursively compute and accumulate low-rank factors of the solution to the respective matrix equations. We now shift our focus to avoiding the explicit solution of the Sylvester equations presented in the previous subsection.

Define \(y_i\) and \(z_i\) as follows:
\[
y_i = \big(A - \tilde{K}^{(i-1)} C + \alpha_i E\big)^{-1} B_{\perp}^{(i-1)}, \qquad
z_i = \big(\hat{A}^\top - (\bar{K}^{(i-1)})^\top \hat{B}^\top + \beta_i \hat{E}^\top\big)^{-1} (\hat{C}_{\perp}^{(i-1)})^\top.
\]
The vectors \(v_i\) and \(w_i\) are then constructed according to the shift case:

Case I: \[v_i = y_i,\quad w_i = z_i.\]

Case II: 
\[
v_i = \begin{bmatrix} \mathrm{Re}(y_i) & \mathrm{Im}(y_i) \end{bmatrix}, \qquad
w_i = \begin{bmatrix} \mathrm{Re}(z_i) & \mathrm{Im}(z_i) \end{bmatrix}.
\]

Case III: 
\[
v_i = \begin{bmatrix} \mathrm{Re}(y_i) & \mathrm{Im}(y_i) \end{bmatrix}, \qquad
w_i = \begin{bmatrix} z_i & z_i' \end{bmatrix},
\]
where 
\[
z_i' = \big(\hat{A}^\top - (\bar{K}^{(i-1)})^\top \hat{B}^\top + \beta_{i+1} \hat{E}^\top\big)^{-1} \hat{E}^\top z_i.
\]

Case IV: 
\[
v_i = \begin{bmatrix} y_i & y_i' \end{bmatrix}, \qquad
w_i = \begin{bmatrix} \mathrm{Re}(z_i) & \mathrm{Im}(z_i) \end{bmatrix},
\]
where 
\[
y_i' = \big(A - \tilde{K}^{(i-1)} C + \alpha_{i+1} E\big)^{-1} E y_i.
\]

The matrix \(x_i\) can be computed analytically as follows.

Case I:
\begin{align}
x_i = -(\alpha_i + \beta_i)\big(I_m + w_i^\top \hat{B} C v_i\big)^{-1}. \label{x_case1}
\end{align}

Case II:
\begin{align}
x_i = \Delta_d \begin{bmatrix} q_{11} & q_{12} \\ q_{21} & q_{22} \end{bmatrix}^{-1}, \label{x_case2_f}
\end{align}
where
\begin{align}
q_{11} &= \eta_{1,r}\delta_r + \eta_{1,i}\delta_i, \quad
q_{12} = \eta_{2,r}\delta_r + \eta_{2,i}\delta_i, \quad
q_{21} = \eta_{1,i}\delta_r - \eta_{1,r}\delta_i, \quad
q_{22} = \eta_{2,i}\delta_r - \eta_{2,r}\delta_i, \\[4pt]
\eta_{1,r} &= -\sigma_i I_m - \sigma_i\big( \mathrm{Re}(z_i)^\top \hat{B} - \mathrm{Im}(\beta_i)\,\mathrm{Im}(z_i)^\top \hat{B}\big)C\,\mathrm{Re}(y_i) - \mathrm{Im}(\alpha_i)\,\mathrm{Re}(z_i)^\top \hat{B} C\,\mathrm{Re}(y_i), \\[4pt]
\eta_{1,i} &= -\mathrm{Im}(\beta_i)I_m - \big(\sigma_i\,\mathrm{Im}(z_i)^\top \hat{B} + \mathrm{Im}(\beta_i)\,\mathrm{Re}(z_i)^\top \hat{B}\big)C\,\mathrm{Re}(y_i) - \mathrm{Im}(\alpha_i)\,\mathrm{Im}(z_i)^\top \hat{B} C\,\mathrm{Im}(y_i), \\[4pt]
\eta_{2,r} &= \mathrm{Im}(\alpha_i)I_m + \mathrm{Im}(\alpha_i)\,\mathrm{Re}(z_i)^\top \hat{B} C\,\mathrm{Re}(y_i) - \big(\sigma_i\,\mathrm{Re}(z_i)^\top \hat{B} - \mathrm{Im}(\beta_i)\,\mathrm{Im}(z_i)^\top \hat{B}\big)C\,\mathrm{Im}(y_i), \\[4pt]
\eta_{2,i} &= \mathrm{Im}(\alpha_i)\,\mathrm{Im}(z_i)^\top \hat{B} C\,\mathrm{Re}(y_i) - \big(\sigma_i\,\mathrm{Im}(z_i)^\top \hat{B} + \mathrm{Im}(\beta_i)\,\mathrm{Re}(z_i)^\top \hat{B}\big)C\,\mathrm{Im}(y_i), \\[4pt]
\sigma_i &= \mathrm{Re}(\alpha_i + \beta_i), \quad
\delta_r = \sigma_i^2 + \mathrm{Im}(\alpha_i)^2 - \mathrm{Im}(\beta_i)^2, \quad
\delta_i = 2\sigma_i\,\mathrm{Im}(\beta_i), \quad
\Delta_d = \delta_r^2 + \delta_i^2. \label{x_case2_l}
\end{align}

Case III:
\begin{align}
x_i = \begin{bmatrix} q_{11} & q_{12} \\ q_{21} & q_{22} \end{bmatrix}^{-1}, \label{x_case3_f}
\end{align}
where
\begin{align}
q_{11} &= -\frac{\sigma_a}{\delta_a}I_m - \frac{\sigma_a}{\delta_a}\big(z_i^\top \hat{B} C \,\mathrm{Re}(y_i)\big) - \frac{\mathrm{Im}(\alpha_i)}{\delta_a}\big(z_i^\top \hat{B} C \,\mathrm{Im}(y_i)\big), \\[4pt]
q_{12} &= \frac{\mathrm{Im}(\alpha_i)}{\delta_a}I_m + \frac{\mathrm{Im}(\alpha_i)}{\delta_a}\big(z_i^\top \hat{B} C \,\mathrm{Re}(y_i)\big) - \frac{\sigma_a}{\delta_a}\big(z_i^\top \hat{B} C \,\mathrm{Im}(y_i)\big), \\[4pt]
q_{21} &= \frac{\mathrm{Im}(\alpha_i)^2 - \sigma_a\mu_a}{\delta_a\epsilon_a}I_m 
+ \frac{\mathrm{Im}(\alpha_i)^2 - \sigma_a\mu_a}{\delta_a\epsilon_a}\big(z_i^\top \hat{B} C \,\mathrm{Re}(y_i)\big) 
- \frac{\mathrm{Im}(\alpha_i)(\sigma_a+\mu_a)}{\delta_a\epsilon_a}\big(z_i^\top \hat{B} C \,\mathrm{Im}(y_i)\big) \nonumber \\
&\quad - \frac{\mu_a}{\epsilon_a}\big((z_i')^\top \hat{B} C \,\mathrm{Re}(y_i)\big) - \frac{\mathrm{Im}(\alpha_i)}{\epsilon_a}\big((z_i')^\top \hat{B} C \,\mathrm{Im}(y_i)\big), \\[4pt]
q_{22} &= \frac{\mathrm{Im}(\alpha_i)(\sigma_a+\mu_a)}{\delta_a\epsilon_a}I_m 
+ \frac{\mathrm{Im}(\alpha_i)(\sigma_a+\mu_a)}{\delta_a\epsilon_a}\big(z_i^\top \hat{B} C \,\mathrm{Re}(y_i)\big) 
+ \frac{\mathrm{Im}(\alpha_i)^2 - \sigma_a\mu_a}{\delta_a\epsilon_a}\big(z_i^\top \hat{B} C \,\mathrm{Im}(y_i)\big) \nonumber \\
&\quad + \frac{\mathrm{Im}(\alpha_i)}{\epsilon_a}\big((z_i')^\top \hat{B} C \,\mathrm{Re}(y_i)\big) - \frac{\mu_a}{\epsilon_a}\big((z_i')^\top \hat{B} C \,\mathrm{Im}(y_i)\big), \\[4pt]
\sigma_a &= \mathrm{Re}(\alpha_i) + \beta_i, \quad
\mu_a = \mathrm{Re}(\alpha_i) + \beta_{i+1}, \quad
\delta_a = \sigma_a^2 + \mathrm{Im}(\alpha_i)^2, \quad
\epsilon_a = \mu_a^2 + \mathrm{Im}(\alpha_i)^2. \label{x_case3_l}
\end{align}

Case IV:
\begin{align}
x_i = \begin{bmatrix} q_{11} & q_{12} \\ q_{21} & q_{22} \end{bmatrix}^{-1}, \label{x_case4_f}
\end{align}
where
\begin{align}
q_{11} &= -\frac{\sigma_b}{\mu_b}I_m - \frac{\sigma_b}{\delta_b}\big(\mathrm{Re}(z_i)^\top \hat{B} C y_i\big) - \frac{\mathrm{Im}(\beta_i)}{\delta_b}\big(\mathrm{Im}(z_i)^\top \hat{B} C y_i\big), \\[4pt]
q_{12} &= \frac{\mathrm{Im}(\beta_i)^2 - \sigma_b\mu_b}{\delta_b\epsilon_b}I_m 
+ \frac{\mathrm{Im}(\beta_i)^2 - \sigma_b\mu_b}{\delta_b\epsilon_b}\big(\mathrm{Re}(z_i)^\top \hat{B} C y_i\big) 
- \frac{\mathrm{Im}(\beta_i)(\sigma_b+\mu_b)}{\delta_b\epsilon_b}\big(\mathrm{Im}(z_i)^\top \hat{B} C y_i\big) \nonumber \\
&\quad - \frac{\mu_b}{\epsilon_b}\big(\mathrm{Re}(z_i)^\top \hat{B} C y_i'\big) - \frac{\mathrm{Im}(\beta_i)}{\epsilon_b}\big(\mathrm{Im}(z_i)^\top \hat{B} C y_i'\big), \\[4pt]
q_{21} &= \frac{\mathrm{Im}(\beta_i)}{\delta_b}I_m + \frac{\mathrm{Im}(\beta_i)}{\delta_b}\big(\mathrm{Re}(z_i)^\top \hat{B} C y_i\big) - \frac{\sigma_b}{\delta_b}\big(\mathrm{Im}(z_i)^\top \hat{B} C y_i\big), \\[4pt]
q_{22} &= \frac{\mathrm{Im}(\beta_i)(\sigma_b+\mu_b)}{\delta_b\epsilon_b}I_m 
+ \frac{\mathrm{Im}(\beta_i)(\sigma_b+\mu_b)}{\delta_b\epsilon_b}\big(\mathrm{Re}(z_i)^\top \hat{B} C y_i\big) 
+ \frac{\mathrm{Im}(\beta_i)^2 - \sigma_b\mu_b}{\delta_b\epsilon_b}\big(\mathrm{Im}(z_i)^\top \hat{B} C y_i\big) \nonumber \\
&\quad + \frac{\mathrm{Im}(\beta_i)}{\epsilon_b}\big(\mathrm{Re}(z_i)^\top \hat{B} C y_i'\big) - \frac{\mu_b}{\epsilon_b}\big(\mathrm{Im}(z_i)^\top \hat{B} C y_i'\big), \\[4pt]
\sigma_b &= \alpha_i + \mathrm{Re}(\beta_i), \quad
\mu_b = \alpha_{i+1} + \mathrm{Re}(\beta_i), \quad
\delta_b = \sigma_b^2 + \mathrm{Im}(\beta_i)^2, \quad
\epsilon_b = \mu_b^2 + \mathrm{Im}(\beta_i)^2. \label{x_case4_l}
\end{align}

The pseudo-code of the low-rank ADI algorithm for NAREs (N-RADI) is given in Algorithm 1.\\
\\
\\
\\
\\
\\
\\
\noindent\rule{\textwidth}{0.8pt}

\textbf{Algorithm 1: N-RADI}

\noindent\rule{\textwidth}{0.8pt}

\textbf{Input:} Matrices of NARE \eqref{nare}: $A$, $B$, $C$, $E$, $\hat{A}$, $\hat{A}$, $\hat{B}$, $\hat{C}$, $\hat{E}$; ADI shifts: $\{\alpha_i\}_{i=1}^{k}\in\mathbb{C}_{-}$, $\{\beta_i\}_{i=1}^{k}\in\mathbb{C}_{-}$; Tolerance: $\tau\in[0,1]$. 

\textbf{Output:} Approximation of $X$: $X\approx\tilde{X}^{(i)}=V^{(i)}\bar{X}^{(i)}(\hat{W}^{(i)})^\top$; Approximations of gain matrices: $K_{\mathrm{gain}}\approx \tilde{K}^{(i)}=E\tilde{X}^{(i)}\hat{B}$, $\hat{K}_{\mathrm{gain}}\approx\bar{K}^{(i)}= C\tilde{X}^{(i)}\hat{E}$; Residual: $R_x^{(i)}=B_{\perp}^{(i)}\hat{C}_{\perp}^{(i)}$.

1. \textbf{Initialization:} $i=0$, $B_{\perp}^{(i)}=B$, $\hat{C}_{\perp}^{(i)}=\hat{C}$, $\tilde{K}^{(i)}=0$, $\bar{K}^{(i)}=0$, $V^{(i)}=[\;]$, $\bar{X}^{(i)}=[\;]$, $\hat{W}^{(i)}=[\;]$.

2. \textbf{while} $\frac{\|B_{\perp}^{(i)}\hat{C}_{\perp}^{(i)}\|}{\|B\hat{C}\|}\geq \tau$ \textbf{do}

3. Solve for $y_i$ and $z_i$:

\hspace*{2cm}$\big(A-\tilde{K}^{(i-1)}C+\alpha_i E\big)y_i=B_{\perp}^{(i-1)}$,\quad $\big(\hat{A}^\top-(\bar{K}^{(i-1)})^\top \hat{B}^\top +\beta_i \hat{E}^\top\big)z_i=(\hat{C}_{\perp}^{(i-1)})^\top$.

4. \textbf{If} $\mathrm{Im}(\alpha_i)=0$ $\land$ $\mathrm{Im}(\beta_i)=0$ \textbf{then}

5. Set $v_i=y_i$ and $w_i=z_i$, and compute $x_i$ from \eqref{x_case1}.

6. Expand $V^{(i)}=\begin{bmatrix}V^{(i-1)}&v_i\end{bmatrix}$, $\hat{W}^{(i)}=\begin{bmatrix}\hat{W}^{(i-1)}&w_i\end{bmatrix}$, and $\bar{X}^{(i)}=\mathrm{blkdiag}(\bar{X}^{(i-1)},x_i)$.

7. Update $B_{\perp}^{(i)}=B_{\perp}^{(i-1)}+Ev_ix_i$, $\hat{C}_{\perp}^{(i)}=\hat{C}_{\perp}^{(i-1)}+x_iw_i^\top\hat{E}$, $\tilde{K}^{(i)}=\tilde{K}^{(i-1)}+Ev_ix_iw_i^\top\hat{B}$, and

\hspace*{0.5cm}$\bar{K}^{(i)}=\bar{K}^{(i-1)}+Cv_ix_iw_i^\top\hat{E}$, $i=i+1$.

8. \textbf{else if} $\mathrm{Im}(\alpha_i)\neq0$ $\land$ $\mathrm{Im}(\beta_i)\neq0$ \textbf{then}

9. Set $v_i=\begin{bmatrix}\mathrm{Re}(y_i)&\mathrm{Im}(y_i)\end{bmatrix}$ and $w_i=\begin{bmatrix}\mathrm{Re}(z_i)&\mathrm{Im}(z_i)\end{bmatrix}$, and compute $x_i$ from \eqref{x_case2_f}-\eqref{x_case2_l}.

10. Expand $V^{(i)}=\begin{bmatrix}V^{(i-1)}&v_i\end{bmatrix}$, $\hat{W}^{(i)}=\begin{bmatrix}\hat{W}^{(i-1)}&w_i\end{bmatrix}$, and $\bar{X}^{(i)}=\mathrm{blkdiag}(\bar{X}^{(i-1)},x_i)$.

11. Update $B_{\perp}^{(i)}=B_{\perp}^{(i-1)}+Ev_ix_i\begin{bmatrix}I_m\\0\end{bmatrix}$, $\hat{C}_{\perp}^{(i)}=\hat{C}_{\perp}^{(i-1)}+\begin{bmatrix}I_m&0\end{bmatrix}x_iw_i^\top\hat{E}$, $\tilde{K}^{(i)}=\tilde{K}^{(i-1)}+Ev_ix_iw_i^\top\hat{B}$,

\hspace*{0.5cm}and $\bar{K}^{(i)}=\bar{K}^{(i-1)}+Cv_ix_iw_i^\top\hat{E}$, $i=i+2$.

12. \textbf{else if} $\mathrm{Im}(\alpha_i)\neq0$ $\land$ $\mathrm{Im}(\beta_i)=0$ $\land$ $\mathrm{Im}(\beta_{i+1})=0$ \textbf{then}

13. Solve for $z^{\prime}_i:$ $\big(\hat{A}^\top-(\bar{K}^{(i-1)})^\top \hat{B}^\top +\beta_{i+1} \hat{E}^\top\big)z^{\prime}_i=\hat{E}^\top z_i$.

14. Set $v_i=\begin{bmatrix}\mathrm{Re}(y_i)&\mathrm{Im}(y_i)\end{bmatrix}$ and $w_i=\begin{bmatrix}z_i&z_i^{\prime}\end{bmatrix}$, and compute $x_i$ from \eqref{x_case3_f}-\eqref{x_case3_l}.

15. Expand $V^{(i)}=\begin{bmatrix}V^{(i-1)}&v_i\end{bmatrix}$, $\hat{W}^{(i)}=\begin{bmatrix}\hat{W}^{(i-1)}&w_i\end{bmatrix}$, and $\bar{X}^{(i)}=\mathrm{blkdiag}(\bar{X}^{(i-1)},x_i)$.

16. Update $B_{\perp}^{(i)}=B_{\perp}^{(i-1)}+Ev_ix_i\begin{bmatrix}I_m\\0\end{bmatrix}$, $\hat{C}_{\perp}^{(i)}=\hat{C}_{\perp}^{(i-1)}+\begin{bmatrix}I_m&0\end{bmatrix}x_iw_i^\top\hat{E}$, $\tilde{K}^{(i)}=\tilde{K}^{(i-1)}+Ev_ix_iw_i^\top\hat{B}$,

\hspace*{0.5cm}and $\bar{K}^{(i)}=\bar{K}^{(i-1)}+Cv_ix_iw_i^\top\hat{E}$, $i=i+2$.

17. \textbf{else if} $\mathrm{Im}(\alpha_i)=0$ $\land$ $\mathrm{Im}(\alpha_{i+1})=0$ $\land$ $\mathrm{Im}(\beta_{i})\neq0$ \textbf{then}

18. Solve for $y^{\prime}_i:$ $\big(A-\tilde{K}^{(i-1)}C+\alpha_{i+1} E\big)y^{\prime}_i=Ey_i$.

19. Set $v_i=\begin{bmatrix}y_i&y_i^{\prime}\end{bmatrix}$ and $w_i=\begin{bmatrix}\mathrm{Re}(z_i)&\mathrm{Im}(z_i)\end{bmatrix}$, and compute $x_i$ from \eqref{x_case4_f}-\eqref{x_case4_l}.

20. Expand $V^{(i)}=\begin{bmatrix}V^{(i-1)}&v_i\end{bmatrix}$, $\hat{W}^{(i)}=\begin{bmatrix}\hat{W}^{(i-1)}&w_i\end{bmatrix}$, and $\bar{X}^{(i)}=\mathrm{blkdiag}(\bar{X}^{(i-1)},x_i)$.

21. Update $B_{\perp}^{(i)}=B_{\perp}^{(i-1)}+Ev_ix_i\begin{bmatrix}I_m\\0\end{bmatrix}$, $\hat{C}_{\perp}^{(i)}=\hat{C}_{\perp}^{(i-1)}+\begin{bmatrix}I_m&0\end{bmatrix}x_iw_i^\top\hat{E}$, $\tilde{K}^{(i)}=\tilde{K}^{(i-1)}+Ev_ix_iw_i^\top\hat{B}$,

\hspace*{0.5cm}and $\bar{K}^{(i)}=\bar{K}^{(i-1)}+Cv_ix_iw_i^\top\hat{E}$, $i=i+2$.

22. \textbf{end if}

23. \textbf{end while}\\

\noindent\rule{\textwidth}{0.8pt}

The shifted linear solves to compute \(y_i\), \(y_i^\prime\), \(z_i\), and \(z_i^\prime\) require efficient implementation, which is achieved using the SMW formula. Thus the linear solves should be rewritten as
\[
\big(A + \alpha E\big) y = \begin{bmatrix} B_{\perp} & \tilde{K} \end{bmatrix}, \qquad
\big(\hat{A}^\top + \beta \hat{E}\big) z = \begin{bmatrix} \hat{C}_{\perp}^\top & \bar{K}^\top \end{bmatrix}.
\]
Depending on how large \(p\) is, the computational cost of these linear solves can be higher than that of the linear solves in low-rank ADI methods for Lyapunov and Sylvester equations \cite{benner2013reformulated,benner2014computing}. This issue is addressed in the next subsection.
\subsection{Avoiding SMW Formulas}
Let us define $s_{v,\mathrm{lyap}}^{(i)}$, $l_{v,\mathrm{lyap}}^{(i)}$, $s_{w,\mathrm{lyap}}^{(i)}$, and $l_{w,\mathrm{lyap}}^{(i)}$ as follows:
\begin{align}
s_{v,\mathrm{lyap}}^{(i)} & = 
\begin{cases} 
-\alpha_i I_{m}, \hspace*{5.9cm} \text{if } \mathrm{Im}(\alpha_i) = 0, \\[6pt]
\gamma_{v,i}^2\begin{bmatrix} 
I_{m} & \frac{\sqrt{1+\delta_{v,i}^2}}{2\delta_{v,i}}I_{m} \\ 
&\\
-\frac{\sqrt{1+\delta_{v,i}^2}}{2\delta_{v,i}}I_{m} & 0
\end{bmatrix}, \hspace*{2cm} \text{if } \mathrm{Im}(\alpha_i) \neq 0,
\end{cases}\label{sz}\\[6pt]
l_{v,\mathrm{lyap}}^{(i)} &= 
\begin{cases} 
-\gamma_{v,i} I_{m}, \hspace*{5.75cm} \text{if } \mathrm{Im}(\alpha_i) = 0, \\[6pt]
-\sqrt{2}\gamma_{v,i}\begin{bmatrix} 
I_{m} & 0
\end{bmatrix}, \hspace*{4.3cm} \text{if } \mathrm{Im}(\alpha_i) \neq 0,
\end{cases}\\[6pt]
s_{w,\mathrm{lyap}}^{(i)} & = 
\begin{cases} 
-\beta_i I_{m}, \hspace*{5.95cm} \text{if } \mathrm{Im}(\beta_i) = 0, \\[6pt]
\gamma_{w,i}^2\begin{bmatrix} 
I_{m} & \frac{\sqrt{1+\delta_{w,i}^2}}{2\delta_{w,i}}I_{m} \\ 
&\\
-\frac{\sqrt{1+\delta_{w,i}^2}}{2\delta_{w,i}}I_{m} & 0
\end{bmatrix}, \hspace*{1.85cm} \text{if } \mathrm{Im}(\beta_i) \neq 0,
\end{cases}\\[6pt]
l_{w,\mathrm{lyap}}^{(i)} &= 
\begin{cases} 
-\gamma_{w,i} I_{m}, \hspace*{5.7cm} \text{if } \mathrm{Im}(\beta_i) = 0, \\[6pt]
-\sqrt{2}\gamma_{w,i}\begin{bmatrix} 
I_{m} & 0
\end{bmatrix}, \hspace*{4.25cm} \text{if } \mathrm{Im}(\beta_i) \neq 0,
\end{cases}\label{lz}
\end{align}
where $\gamma_{v,i}=\sqrt{-2\mathrm{Re}(\alpha_i)}$, $\delta_{v,i}=\frac{\mathrm{Re}(\alpha_i)}{\mathrm{Im}(\alpha_i)}$, $\gamma_{w,i}=\sqrt{-2\mathrm{Re}(\beta_i)}$, and $\delta_{w,i}=\frac{\mathrm{Re}(\beta_i)}{\mathrm{Im}(\beta_i)}$.

Now define \(S_{v,\mathrm{lyap}}^{(i)}\), \(L_{v,\mathrm{lyap}}^{(i)}\), \(V_{\mathrm{lyap}}^{(i)}\), \(S_{w,\mathrm{lyap}}^{(i)}\), \(L_{w,\mathrm{lyap}}^{(i)}\), and \(\hat{W}_{\mathrm{lyap}}^{(i)}\) as follows:
\begin{align}
S_{v,\mathrm{lyap}}^{(i)}&=\begin{bmatrix}S_{v,\mathrm{lyap}}^{(i-1)}&(L_{v,\mathrm{lyap}}^{(i-1)})^\top l_{v,\mathrm{lyap}}^{(i)}\\0& s_{v,\mathrm{lyap}}^{(i)}\end{bmatrix},& L_{v,\mathrm{lyap}}^{(i)}&=\begin{bmatrix}L_{v,\mathrm{lyap}}^{(i-1)}&l_{v,\mathrm{lyap}}^{(i)}\end{bmatrix},& V_{\mathrm{lyap}}^{(i)}&=\begin{bmatrix}V_{\mathrm{lyap}}^{(i-1)}&v_{\mathrm{lyap}}^{(i)}\end{bmatrix},\nonumber\\[6pt]
S_{w,\mathrm{lyap}}^{(i)}&=\begin{bmatrix}S_{w,\mathrm{lyap}}^{(i-1)}&(L_{w,\mathrm{lyap}}^{(i-1)})^\top l_{w,\mathrm{lyap}}^{(i)}\\0& s_{w,\mathrm{lyap}}^{(i)}\end{bmatrix},& L_{w,\mathrm{lyap}}^{(i)}&=\begin{bmatrix}L_{w,\mathrm{lyap}}^{(i-1)}&l_{w,\mathrm{lyap}}^{(i)}\end{bmatrix},& \hat{W}_{\mathrm{lyap}}^{(i)}&=\begin{bmatrix}\hat{W}_{\mathrm{lyap}}^{(i-1)}&\hat{w}_{\mathrm{lyap}}^{(i)}\end{bmatrix}.\nonumber
\end{align}
Moreover, assume that \(V_{\mathrm{lyap}}^{(i)}\) and \(\hat{W}_{\mathrm{lyap}}^{(i)}\) satisfy the Sylvester equations
\begin{align}
A V_{\mathrm{lyap}}^{(i)}-E V_{\mathrm{lyap}}^{(i)} S_{v,\mathrm{lyap}}^{(i)}+B L_{v,\mathrm{lyap}}^{(i)}=0,\\
\hat{A}^\top \hat{W}_{\mathrm{lyap}}^{(i)}-\hat{E}^\top \hat{W}_{\mathrm{lyap}}^{(i)} S_{w,\mathrm{lyap}}^{(i)}+\hat{C}^\top L_{w,\mathrm{lyap}}^{(i)}=0.
\end{align}
As shown in \cite{wolfthesis,zulfiqar2025unified}, the low-rank approximate solutions of the Lyapunov equations
\begin{align}
A P_{\mathrm{lyap}} E^\top+ E P_{\mathrm{lyap}} A^\top+ BB^\top=0,\label{lyap_p}\\
\hat{A}^\top \hat{Q}_{\mathrm{lyap}} \hat{E}+ \hat{E}^\top \hat{Q}_{\mathrm{lyap}} \hat{A}+ \hat{C}^\top \hat{C}=0,\label{lyap_q}
\end{align}
obtained via the CF-ADI method \cite{benner2013reformulated} are given by \(P_{\mathrm{lyap}}\approx V_{\mathrm{lyap}}^{(i)}(V_{\mathrm{lyap}}^{(i)})^\top\) and \(\hat{Q}_{\mathrm{lyap}}\approx \hat{W}_{\mathrm{lyap}}^{(i)}(\hat{W}_{\mathrm{lyap}}^{(i)})^\top\).

Let us define \( y_{\mathrm{lyap}}^{(i)} \) and \( z_{\mathrm{lyap}}^{(i)} \) as
\begin{align}
y_{\mathrm{lyap}}^{(i)}&=\big(A+\alpha_i E\big)^{-1}\mathcal{B}_{\perp}^{(i-1)},\\
z_{\mathrm{lyap}}^{(i)}&=\big(\hat{A}^\top+\beta_i \hat{E}^\top\big)^{-1}\big(\hat{\mathcal{C}}_{\perp}^{(i-1)}\big)^\top,
\end{align}
where \( \mathcal{B}_{\perp}^{(i)} \) and \( \hat{\mathcal{C}}_{\perp}^{(i)} \) are updated recursively as
\begin{align}
\mathcal{B}_{\perp}^{(i)}&=\begin{cases} 
\mathcal{B}_{\perp}^{(i-1)}+\gamma_{v,i}^2Ey_{\mathrm{lyap}}^{(i)}, \hspace*{5cm} \text{if } \mathrm{Im}(\alpha_i) = 0, \\
\mathcal{B}_{\perp}^{(i-1)}+2\gamma_{v,i}^2E\Big(\mathrm{Re}\big(y_{\mathrm{lyap}}^{(i)}\big)+\delta_{v,i}\mathrm{Im}\big(y_{\mathrm{lyap}}^{(i)}\big)\Big), \hspace*{1.3cm} \text{if } \mathrm{Im}(\alpha_i) \neq 0,
\end{cases}\\[6pt]
\hat{\mathcal{C}}_{\perp}^{(i)} &= 
\begin{cases} 
\hat{\mathcal{C}}_{\perp}^{(i-1)}+\gamma_{w,i}^2(z_{\mathrm{lyap}}^{(i)})^\top \hat{E}, \hspace*{4.5cm} \text{if } \mathrm{Im}(\beta_i) = 0, \\
\hat{\mathcal{C}}_{\perp}^{(i-1)}+2\gamma_{w,i}^2\Big(\mathrm{Re}\big(z_{\mathrm{lyap}}^{(i)}\big)+\delta_{w,i}\mathrm{Im}\big(z_{\mathrm{lyap}}^{(i)}\big)\Big)^\top \hat{E}, \hspace*{1cm} \text{if } \mathrm{Im}(\beta_i) \neq 0,
\end{cases}\label{C_}
\end{align}
with \( \mathcal{B}_{\perp}^{(0)} = B \) and \( \hat{\mathcal{C}}_{\perp}^{(0)} = \hat{C} \).

As shown in \cite{benner2013reformulated}, \(v_{\mathrm{lyap}}^{(i)}\) and \(\hat{w}_{\mathrm{lyap}}^{(i)}\) are computed in CF-ADI \cite{benner2013reformulated} to approximate \(P_{\mathrm{lyap}}\) and \(\hat{Q}_{\mathrm{lyap}}\) as follows:
\begin{align}
v_{\mathrm{lyap}}^{(i)} &= 
\begin{cases} 
\gamma_{v,i}y_{\mathrm{lyap}}^{(i)}, \hspace*{10cm} \text{if } \mathrm{Im}(\alpha_i) = 0, \\
 \begin{bmatrix}\sqrt{2}\gamma_{v,i}\Big(\mathrm{Re}\big(y_{\mathrm{lyap}}^{(i)}\big)+\delta_{v,i}\mathrm{Im}\big(y_{\mathrm{lyap}}^{(i)}\big)\Big)& \sqrt{2}\gamma_{v,i}\sqrt{\delta_{v,i}^2+1}\mathrm{Im}\big(y_{\mathrm{lyap}}^{(i)}\big)\end{bmatrix}, \hspace*{1.35cm} \text{if } \mathrm{Im}(\alpha_i) \neq 0,
\end{cases}\\[6pt]
\hat{w}_{\mathrm{lyap}}^{(i)} &= 
\begin{cases} 
\gamma_{w,i}z_{\mathrm{lyap}}^{(i)}, \hspace*{9.9cm} \text{if } \mathrm{Im}(\beta_i) = 0, \\
 \begin{bmatrix}\sqrt{2}\gamma_{w,i}\Big(\mathrm{Re}\big(z_{\mathrm{lyap}}^{(i)}\big)+\delta_{w,i}\mathrm{Im}\big(z_{\mathrm{lyap}}^{(i)}\big)\Big)& \sqrt{2}\gamma_{w,i}\sqrt{\delta_{w,i}^2+1}\mathrm{Im}\big(z_{\mathrm{lyap}}^{(i)}\big)\end{bmatrix}, \hspace*{1.1cm} \text{if } \mathrm{Im}(\beta_i) \neq 0.
\end{cases}
\end{align}

The matrices \(V_{\mathrm{lyap}}^{(i)}\) and \(\hat{W}_{\mathrm{lyap}}^{(i)}\) also satisfy the following Sylvester equations:
\begin{align}
A V_{\mathrm{lyap}}^{(i)}+E V_{\mathrm{lyap}}^{(i)} (S_{v,\mathrm{lyap}}^{(i)})^\top+\mathcal{B}_{\perp}^{(i)} L_{v,\mathrm{lyap}}^{(i)}&=0,\\
\hat{A}^\top \hat{W}_{\mathrm{lyap}}^{(i)}+\hat{E}^\top \hat{W}_{\mathrm{lyap}}^{(i)} (S_{w,\mathrm{lyap}}^{(i)})^\top+(\hat{\mathcal{C}}_{\perp}^{(i)})^\top L_{w,\mathrm{lyap}}^{(i)}&=0;\label{W_lyap_sylv2}
\end{align}
cf. \cite{wolfthesis,zulfiqar2025unified}.

It is shown in \cite{wolf2016adi} that \(V_{\mathrm{lyap}}^{(i)}\) and \(\hat{W}_{\mathrm{lyap}}^{(i)}\) satisfy the properties \eqref{int_prop1} and \eqref{int_prop2}. Consequently, there exist invertible matrices \(T_{v}^{(i)}\) and \(T_{w}^{(i)}\) such that \(V^{(i)} = V_{\mathrm{lyap}}^{(i)} T_{v}^{(i)}\) and \(\hat{W}^{(i)} = \hat{W}_{\mathrm{lyap}}^{(i)} T_{w}^{(i)}\). The recursive formulas for these matrices are given in the next proposition.
\begin{proposition}\label{th2}
Let us assume that all the variables in Theorem \ref{th1} and \eqref{sz}-\eqref{W_lyap_sylv2} hold. Define \(T_{v}^{(i)}\), \(T_{w}^{(i)}\), \(\hat{B}_{r,\mathrm{lyap}}^{(i)}\), and \(C_{r,\mathrm{lyap}}^{(i)}\) as follows:
\begin{align}
T_{v}^{(i)}=\begin{bmatrix}T_{v}^{(i-1)}&t_{v,1}^{(i)}\\0&t_{v,2}^{(i)}\end{bmatrix},\quad T_{w}^{(i)}=\begin{bmatrix}T_{w}^{(i-1)}&t_{w,1}^{(i)}\\0&t_{w,2}^{(i)}\end{bmatrix},\quad \hat{B}_{r,\mathrm{lyap}}^{(i)}=(\hat{W}_{\mathrm{lyap}}^{(i)})^\top\hat{B},\quad C_{r,\mathrm{lyap}}^{(i)}=C V_{\mathrm{lyap}}^{(i)}.
\end{align}
Further, define \(t_{v,\mathrm{lyap}}^{(i)}\), \(t_{v,\mathrm{lyap}}^{\prime(i)}\), \(t_{w,\mathrm{lyap}}^{(i)}\), and \(t_{w,\mathrm{lyap}}^{\prime(i)}\) as follows:
\begin{align}
t_{v,\mathrm{lyap}}^{(i)}&=\Big(-(S_{v,\mathrm{lyap}}^{(i)})^\top-\begin{bmatrix}T_{v}^{(i-1)}\\0\end{bmatrix}\bar{X}^{(i-1)}\hat{B}_r^{(i-1)}C_{r,\mathrm{lyap}}^{(i)}+\alpha_iI\Big)^{-1}
\Big((L_{v,\mathrm{lyap}}^{(i)})^\top-\begin{bmatrix}T_{v}^{(i-1)}\\0\end{bmatrix}B_r^{(i-1)}\Big),\label{tv}\\
t_{v,\mathrm{lyap}}^{\prime(i)}&=\Big(-(S_{v,\mathrm{lyap}}^{(i)})^\top-\begin{bmatrix}T_{v}^{(i-1)}\\0\end{bmatrix}\bar{X}^{(i-1)}\hat{B}_r^{(i-1)}C_{r,\mathrm{lyap}}^{(i)}+\alpha_{i+1}I\Big)^{-1}t_{v,\mathrm{lyap}}^{(i)},\label{tvp}\\
t_{w,\mathrm{lyap}}^{(i)}&=\Big(-(S_{w,\mathrm{lyap}}^{(i)})^\top-\begin{bmatrix}T_{w}^{(i-1)}\\0\end{bmatrix}(\bar{X}^{(i-1)})^\top (C_r^{(i-1)})^\top(\hat{B}_{r,\mathrm{lyap}}^{(i)})^\top+\beta_iI\Big)^{-1}
\Big((L_{w,\mathrm{lyap}}^{(i)})^\top\nonumber\\
&\qquad-\begin{bmatrix}T_{w}^{(i-1)}\\0\end{bmatrix}\big(\hat{C}_r^{(i-1)})^\top\Big),\label{tw}\\
t_{w,\mathrm{lyap}}^{\prime(i)}&=\Big(-(S_{w,\mathrm{lyap}}^{(i)})^\top-\begin{bmatrix}T_{w}^{(i-1)}\\0\end{bmatrix}(\bar{X}^{(i-1)})^\top (C_r^{(i-1)})^\top(\hat{B}_{r,\mathrm{lyap}}^{(i)})^\top+\beta_{i+1}I\Big)^{-1}t_{w,\mathrm{lyap}}^{(i)}.\label{twp}
\end{align}
Next, set \(t_{v}^{(i)}\) and \(t_{w}^{(i)}\) as follows:

For Case I:
\begin{align}
t_{v}^{(i)}=\begin{bmatrix}t_{v,1}^{(i)}\\t_{v,2}^{(i)}\end{bmatrix}=t_{v,\mathrm{lyap}}^{(i)}\quad \text{and}\quad  t_{w}^{(i)}=\begin{bmatrix}t_{w,1}^{(i)}\\t_{w,2}^{(i)}\end{bmatrix}=t_{w,\mathrm{lyap}}^{(i)}.
\end{align}

For Case II:
\begin{align}
t_{v}^{(i)}=\begin{bmatrix}t_{v,1}^{(i)}\\t_{v,2}^{(i)}\end{bmatrix}=\begin{bmatrix}\mathrm{Re}\big(t_{v,\mathrm{lyap}}^{(i)}\big)&\mathrm{Im}\big(t_{v,\mathrm{lyap}}^{(i)}\big)\end{bmatrix}\quad \text{and}\quad  t_{w}^{(i)}=\begin{bmatrix}t_{w,1}^{(i)}\\t_{w,2}^{(i)}\end{bmatrix}=\begin{bmatrix}\mathrm{Re}\big(t_{w,\mathrm{lyap}}^{(i)}\big)&\mathrm{Im}\big(t_{w,\mathrm{lyap}}^{(i)}\big)\end{bmatrix}.
\end{align}

For Case III:
\begin{align}
t_{v}^{(i)}=\begin{bmatrix}t_{v,1}^{(i)}\\t_{v,2}^{(i)}\end{bmatrix}=\begin{bmatrix}\mathrm{Re}\big(t_{v,\mathrm{lyap}}^{(i)}\big)&\mathrm{Im}\big(t_{v,\mathrm{lyap}}^{(i)}\big)\end{bmatrix}\quad \text{and}\quad  t_{w}^{(i)}=\begin{bmatrix}t_{w,1}^{(i)}\\t_{w,2}^{(i)}\end{bmatrix}=\begin{bmatrix}t_{w,\mathrm{lyap}}^{(i)}&t_{w,\mathrm{lyap}}^{\prime(i)}\end{bmatrix}.
\end{align}

For Case IV:
\begin{align}
t_{v}^{(i)}=\begin{bmatrix}t_{v,1}^{(i)}\\t_{v,2}^{(i)}\end{bmatrix}=\begin{bmatrix}t_{v,\mathrm{lyap}}^{(i)}&t_{v,\mathrm{lyap}}^{\prime(i)}\end{bmatrix}\quad \text{and}\quad  t_{w}^{(i)}=\begin{bmatrix}t_{w,1}^{(i)}\\t_{w,2}^{(i)}\end{bmatrix}=\begin{bmatrix}\mathrm{Re}\big(t_{w,\mathrm{lyap}}^{(i)}\big)&\mathrm{Im}\big(t_{w,\mathrm{lyap}}^{(i)}\big)\end{bmatrix}.
\end{align}

Then \(v_i\), \(V^{(i)}\), \(w_i\), and \(\hat{W}^{(i)}\) can be extracted from \(V_{\mathrm{lyap}}^{(i)}\) and \(\hat{W}_{\mathrm{lyap}}^{(i)}\) as follows:
\[
v_i = V_{\mathrm{lyap}}^{(i)} t_{v}^{(i)},\quad V^{(i)} = V_{\mathrm{lyap}}^{(i)} T_{v}^{(i)},\quad w_i = \hat{W}_{\mathrm{lyap}}^{(i)} t_{w}^{(i)},\quad \hat{W}^{(i)} = \hat{W}_{\mathrm{lyap}}^{(i)} T_{w}^{(i)}.
\]
\end{proposition}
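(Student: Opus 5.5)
We argue by induction on $i$ and show that $v_i=V_{\mathrm{lyap}}^{(i)}t_v^{(i)}$ solves the same shifted linear system that defines $y_i$ (and $y_i'$). The claims for $w_i$ and $\hat{W}^{(i)}$ then follow by the dual argument, with $(A,E,B,C)$ replaced by $(\hat{A}^\top,\hat{E}^\top,\hat{C}^\top,\hat{B}^\top)$. The inductive hypothesis is $V^{(i-1)}=V_{\mathrm{lyap}}^{(i-1)}T_v^{(i-1)}$. This lets us write
\[
\tilde{K}^{(i-1)}=EV^{(i-1)}\bar{X}^{(i-1)}(\hat{W}^{(i-1)})^\top\hat{B}=E\,V_{\mathrm{lyap}}^{(i)}\begin{bmatrix}T_v^{(i-1)}\\0\end{bmatrix}\bar{X}^{(i-1)}\hat{B}_r^{(i-1)}
\]
and
\[
B_{\perp}^{(i-1)}=B-E\,V_{\mathrm{lyap}}^{(i)}\begin{bmatrix}T_v^{(i-1)}\\0\end{bmatrix}B_r^{(i-1)},
\]
using $B_r^{(i-1)}=\bar{X}^{(i-1)}(L_w^{(i-1)})^\top$ from Theorem \ref{th1}. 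Here $\hat{B}_r^{(i-1)}=(\hat{W}^{(i-1)})^\top\hat{B}$ is the quantity that appears in \eqref{tv}.

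The key step is to eliminate $B$ and $AV_{\mathrm{lyap}}^{(i)}$ using the second Sylvester equation for $V_{\mathrm{lyap}}^{(i)}$. This requires $\mathcal{B}_{\perp}^{(i)}$, which is awkward, so we use the first one instead: $AV_{\mathrm{lyap}}^{(i)}=EV_{\mathrm{lyap}}^{(i)}S_{v,\mathrm{lyap}}^{(i)}-BL_{v,\mathrm{lyap}}^{(i)}$. The Lyapunov structure gives $S_{v,\mathrm{lyap}}^{(i)}+(S_{v,\mathrm{lyap}}^{(i)})^\top=(L_{v,\mathrm{lyap}}^{(i)})^\top L_{v,\mathrm{lyap}}^{(i)}$, which can be checked blockwise from \eqref{sz}--\eqref{lz}. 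Combining this with the equation $AV_{\mathrm{lyap}}^{(i)}+EV_{\mathrm{lyap}}^{(i)}(S_{v,\mathrm{lyap}}^{(i)})^\top+\mathcal{B}_\perp^{(i)}L_{v,\mathrm{lyap}}^{(i)}=0$ yields $\mathcal{B}_\perp^{(i)}=B-EV_{\mathrm{lyap}}^{(i)}(L_{v,\mathrm{lyap}}^{(i)})^\top$.

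Now take the ansatz $y=V_{\mathrm{lyap}}^{(i)}t$ and compute
\[
(A-\tilde{K}^{(i-1)}C+\alpha_iE)V_{\mathrm{lyap}}^{(i)}t
= EV_{\mathrm{lyap}}^{(i)}\Big(-(S_{v,\mathrm{lyap}}^{(i)})^\top-\begin{bmatrix}T_v^{(i-1)}\\0\end{bmatrix}\bar{X}^{(i-1)}\hat{B}_r^{(i-1)}C_{r,\mathrm{lyap}}^{(i)}+\alpha_iI\Big)t-\mathcal{B}_\perp^{(i)}L_{v,\mathrm{lyap}}^{(i)}t .
\]
We want this to equal $B_\perp^{(i-1)}=\mathcal{B}_\perp^{(i)}+EV_{\mathrm{lyap}}^{(i)}\big((L_{v,\mathrm{lyap}}^{(i)})^\top-[T_v^{(i-1)};0]B_r^{(i-1)}\big)$. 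The choice \eqref{tv} matches the $EV_{\mathrm{lyap}}^{(i)}$ terms exactly. What remains is the $\mathcal{B}_\perp^{(i)}$ terms, namely the identity $L_{v,\mathrm{lyap}}^{(i)}t_{v,\mathrm{lyap}}^{(i)}=-I$, possibly up to the realification used in Case II.

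This identity is the \textbf{main obstacle}. We plan to prove it by multiplying the defining equation of $t_{v,\mathrm{lyap}}^{(i)}$ on the left by $L_{v,\mathrm{lyap}}^{(i)}$ and exploiting the block-triangular form of $S_{v,\mathrm{lyap}}^{(i)}$. The interpolation viewpoint should help here: $L(-S^\top+\alpha I)^{-1}L^\top$ is the ADI residual factor $\prod(\alpha_i-\alpha_j)/(\alpha_i+\bar\alpha_j)$, which vanishes at the shift $\alpha_i$ itself. The feedback term should cancel by SMW applied to the small matrix. If the algebra does not close, we fall back on a cleaner route. Since the span property gives $y_i\in\mathrm{Ran}(V_{\mathrm{lyap}}^{(i)})$, the linear system restricted to this range, with $E V_{\mathrm{lyap}}^{(i)}$ of full column rank, determines $t$ uniquely; we then verify that \eqref{tv} is the projected solution by pre-multiplying with any left inverse $W^\top$ satisfying $W^\top EV_{\mathrm{lyap}}^{(i)}=I$.

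Given $y_i=V_{\mathrm{lyap}}^{(i)}t_{v,\mathrm{lyap}}^{(i)}$, the rest is routine. For $y_i'$ we use $Ey_i=EV_{\mathrm{lyap}}^{(i)}t_{v,\mathrm{lyap}}^{(i)}$: the same manipulation with zero $\mathcal{B}_\perp$ contribution gives \eqref{tvp}, because the right-hand side now lies in $\mathrm{Ran}(EV_{\mathrm{lyap}}^{(i)})$. Taking real and imaginary parts is legitimate since $V_{\mathrm{lyap}}^{(i)}$ is real, and this yields Cases II--IV. Stacking the new column, $V^{(i)}=[V_{\mathrm{lyap}}^{(i-1)}T_v^{(i-1)},\,V_{\mathrm{lyap}}^{(i)}t_v^{(i)}]=V_{\mathrm{lyap}}^{(i)}T_v^{(i)}$, which closes the induction.
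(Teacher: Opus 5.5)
Your reduction is right as far as it goes. Write $S=S_{v,\mathrm{lyap}}^{(i)}$, $L=L_{v,\mathrm{lyap}}^{(i)}$, $P=\begin{bmatrix}T_v^{(i-1)}\\0\end{bmatrix}$, and $t=M^{-1}R$ for the two factors in \eqref{tv}. You correctly obtain $(A-\tilde K^{(i-1)}C+\alpha_iE)V_{\mathrm{lyap}}^{(i)}t=EV_{\mathrm{lyap}}^{(i)}R-\mathcal{B}_\perp^{(i)}Lt$ and $B_\perp^{(i-1)}=\mathcal{B}_\perp^{(i)}+EV_{\mathrm{lyap}}^{(i)}R$, so everything hinges on $Lt=-I$. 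That identity is never proved, and neither route you sketch delivers it.

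\begin{itemize}
\item \emph{The heuristic is misstated.} It is $I+L(sI-S^\top)^{-1}L^\top$, not $L(sI-S^\top)^{-1}L^\top$, that is the ADI residual factor vanishing at $s=\alpha_i$. It also ignores the $P$-terms in $M$ and $R$; an SMW argument would additionally need $L(\alpha_iI-S^\top)^{-1}P=0$.
\item \emph{The fallback is circular.} If $y_i=V_{\mathrm{lyap}}^{(i)}t^*$, pre-multiplying by a left inverse $W^\top$ gives $Mt^*=R+W^\top\mathcal{B}_\perp^{(i)}(I+Lt^*)$. You therefore still need $\mathcal{B}_\perp^{(i)}(I+Lt^*)=0$, i.e.\ the same identity, or an unstated genericity hypothesis such as $\mathrm{Ran}(\mathcal{B}_\perp^{(i)})\cap\mathrm{Ran}(EV_{\mathrm{lyap}}^{(i)})=\{0\}$.
\item \emph{The same gap recurs for $y_i'$.} That $Ey_i\in\mathrm{Ran}(EV_{\mathrm{lyap}}^{(i)})$ handles the right-hand side, but not the term $-\mathcal{B}_\perp^{(i)}Lt'$ produced by $AV_{\mathrm{lyap}}^{(i)}$. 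You need $Lt_{v,\mathrm{lyap}}^{\prime(i)}=0$, which you only assert.
\end{itemize}

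The missing idea is to read off the trailing block row(s) of $Mt=R$. Using $S+S^\top=L^\top L$, which you already checked, $Mt=R$ is equivalent to
\[
(\alpha_iI+S)t-P\big(\bar X^{(i-1)}\hat B_r^{(i-1)}C_{r,\mathrm{lyap}}^{(i)}t-B_r^{(i-1)}\big)=L^\top(I+Lt).
\]
$S$ is block upper triangular and $P$ has zero trailing block rows. Partitioning $t$ conformally, the last block row therefore reads $(\alpha_iI+s)t_2=l^\top(I+Lt)$, with $s,l$ the newest blocks from \eqref{sz}--\eqref{lz}.

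\begin{itemize}
\item \emph{Real $\alpha_i$ (Case I).} Here $\alpha_iI+s=0$ and $l=-\gamma_{v,i}I$ is invertible, so $Lt=-I$.
\item \emph{Complex $\alpha_i$ (Cases II, III).} The second row gives $\alpha_it_{22}=\gamma_{v,i}^2\frac{\sqrt{1+\delta_{v,i}^2}}{2\delta_{v,i}}t_{21}$. Substituting into $\alpha_i$ times the first row yields $(\alpha_i^2+\gamma_{v,i}^2\alpha_i+|\alpha_i|^2)t_{21}=-\sqrt2\gamma_{v,i}\alpha_i(I+Lt)$. The scalar coefficient equals $-\alpha_i\bar\alpha_i+|\alpha_i|^2=0$, so again $Lt=-I$.
\item \emph{Case IV.} Now $S$ ends with the two real blocks for $\alpha_i,\alpha_{i+1}$. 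The last two block rows give $l_2t_3=I+Lt$ and $(\alpha_i+\alpha_{i+1})t_3=0$, hence $t_3=0$ and $Lt=-I$. The last block row of the system defining \eqref{tvp} then reads $-l_2^\top Lt'=t_3=0$, so $Lt'=0$.
\end{itemize}

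With these identities, and their duals for $t_w^{(i)}$ and $t_w^{\prime(i)}$, your induction closes. No genericity assumption is needed beyond the invertibility already implicit in the statement.
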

\begin{proof}
The proof is similar to that of Theorem 2.3 of \cite{zulfiqar2026ldl}, and hence omitted for brevity.
\end{proof}

It is clear that Lyapunov equations \eqref{lyap_p} and \eqref{lyap_q} can be solved simultaneously with \eqref{nare}. In fact, several other matrix equations can be solved simultaneously with shared linear solves, as done in the unified ADI (UADI) framework; see \cite{zulfiqar2025unified}. The unified ADI algorithm for NAREs (UN-RADI) is given in Algorithm 2. It further enhances the applicability of UADI, which reuses the same shifted linear solves for solving several matrix equations simultaneously.\\

\noindent\rule{\textwidth}{0.8pt}

\textbf{Algorithm 2: UN-RADI}

\noindent\rule{\textwidth}{0.8pt}

\textbf{Input:} Matrices of NARE \eqref{nare}: $A$, $B$, $C$, $E$, $\hat{A}$, $\hat{A}$, $\hat{B}$, $\hat{C}$, $\hat{E}$; ADI shifts: $\{\alpha_i\}_{i=1}^{k}\in\mathbb{C}_{-}$, $\{\beta_i\}_{i=1}^{k}\in\mathbb{C}_{-}$; Tolerance: $\tau\in[0,1]$. \\

\textbf{Output:} Approximation of $X$: $X\approx\tilde{X}^{(i)}=V_{\mathrm{lyap}}^{(i)}T_{v}^{(i)}\bar{X}^{(i)}(T_{w}^{(i)})^\top(\hat{W}_{\mathrm{lyap}}^{(i)})^\top$; Approximations of gain matrices: $K_{\mathrm{gain}}\approx \tilde{K}^{(i)}=E\tilde{X}^{(i)}\hat{B}$, $\hat{K}_{\mathrm{gain}}\approx\bar{K}^{(i)}= C\tilde{X}^{(i)}\hat{E}$; Residual: $R_x^{(i)}=B_{\perp}^{(i)}\hat{C}_{\perp}^{(i)}$.\\

1. \textbf{Initialization:} $i=0$, $B_{\perp}^{(i)}=B$, $\mathcal{B}_{\perp}^{(i)}=B$, $\hat{C}_{\perp}^{(i)}=\hat{C}$, $\hat{\mathcal{C}}_{\perp}^{(i)}=\hat{C}$, $\tilde{K}^{(i)}=0$, $\bar{K}^{(i)}=0$, $V_{\mathrm{lyap}}^{(i)}=[\;]$, $T_{v}^{(i)}=[\;]$, $\bar{X}^{(i)}=[\;]$, $\hat{W}_{\mathrm{lyap}}^{(i)}=[\;]$, $T_{w}^{(i)}=[\;]$, $S_{v,\mathrm{lyap}}^{(i)}=[\;]$, $L_{v,\mathrm{lyap}}^{(i)}=[\;]$, $S_{w,\mathrm{lyap}}^{(i)}=[\;]$, $L_{w,\mathrm{lyap}}^{(i)}=[\;]$, $\hat{B}_{r,\mathrm{lyap}}^{(i)}$=[\;], $\hat{B}_r^{(i)}=[\;]$, $B_r^{(i)}=[\;]$, $C_{r,\mathrm{lyap}}^{(i)}$=[\;], $C_r^{(i)}=[\;]$, $\hat{C}_r^{(i)}=[\;]$.\\

2. \textbf{while} $\frac{\|B_{\perp}^{(i)}\hat{C}_{\perp}^{(i)}\|}{\|B\hat{C}\|}\geq \tau$ \textbf{do}\\

3. Solve for $y_{\mathrm{lyap}}^{(i)}$ and $z_{\mathrm{lyap}}^{(i)}$:

\[
\big(A+\alpha_i E\big)y_{\mathrm{lyap}}^{(i)}=\mathcal{B}_{\perp}^{(i-1)}\quad \text{and}\quad \big(\hat{A}^\top+\beta_i \hat{E}^\top\big)z_{\mathrm{lyap}}^{(i)}=\big(\hat{\mathcal{C}}_{\perp}^{(i-1)}\big)^\top.
\]

4. Set $\gamma_{v,i}=\sqrt{-2\mathrm{Re}(\alpha_i)}$ and $\gamma_{w,i}=\sqrt{-2\mathrm{Re}(\beta_i)}$.\\

5. \textbf{If} $\mathrm{Im}(\alpha_i)=0$ $\land$ $\mathrm{Im}(\beta_i)=0$ \textbf{then}\\

6. Set $v_{\mathrm{lyap}}^{(i)}=\gamma_{v,i}y_{\mathrm{lyap}}^{(i)}$, $s_{v,\mathrm{lyap}}^{(i)}=-\alpha_i I_m$, $l_{v,\mathrm{lyap}}^{(i)}=-\gamma_{v,i} I_m$, $w_{\mathrm{lyap}}^{(i)}=\gamma_{w,i}z_{\mathrm{lyap}}^{(i)}$, $s_{w,\mathrm{lyap}}^{(i)}=-\beta_i I_m$, and $l_{w,\mathrm{lyap}}^{(i)}=-\gamma_{w,i} I_m$.

7. Update $\mathcal{B}_{\perp}^{(i)}=\mathcal{B}_{\perp}^{(i-1)}+\gamma_{v,i}^2Ey_{\mathrm{lyap}}^{(i)}$ and $\hat{\mathcal{C}}_{\perp}^{(i)}=\hat{\mathcal{C}}_{\perp}^{(i-1)}+\gamma_{w,i}^2(z_{\mathrm{lyap}}^{(i)})^\top \hat{E}$.

8. Expand $V_{\mathrm{lyap}}^{(i)}=\begin{bmatrix}V_{\mathrm{lyap}}^{(i-1)}&v_{\mathrm{lyap}}^{(i)}\end{bmatrix}$, $C_{r,\mathrm{lyap}}^{(i)}=\begin{bmatrix}C_{r,\mathrm{lyap}}^{(i-1)}&Cv_{\mathrm{lyap}}^{(i)}\end{bmatrix}$, $S_{v,\mathrm{lyap}}^{(i)}=\begin{bmatrix}S_{v,\mathrm{lyap}}^{(i-1)}&(L_{v,\mathrm{lyap}}^{(i-1)})^\top l_{v,\mathrm{lyap}}^{(i)}\\0& s_{v,\mathrm{lyap}}^{(i)}\end{bmatrix}$, $L_{v,\mathrm{lyap}}^{(i)}=\begin{bmatrix}L_{v,\mathrm{lyap}}^{(i-1)}&l_{v,\mathrm{lyap}}^{(i)}\end{bmatrix}$, $\hat{W}_{\mathrm{lyap}}^{(i)}=\begin{bmatrix}\hat{W}_{\mathrm{lyap}}^{(i-1)}&w_{\mathrm{lyap}}^{(i)}\end{bmatrix}$, $\hat{B}_{r,\mathrm{lyap}}^{(i)}=\begin{bmatrix}\hat{B}_{r,\mathrm{lyap}}^{(i-1)}\\(w_{\mathrm{lyap}}^{(i)})^\top\hat{B}\end{bmatrix}$, \\ $S_{w,\mathrm{lyap}}^{(i)}=\begin{bmatrix}S_{w,\mathrm{lyap}}^{(i-1)}&(L_{w,\mathrm{lyap}}^{(i-1)})^\top l_{w,\mathrm{lyap}}^{(i)}\\0& s_{w,\mathrm{lyap}}^{(i)}\end{bmatrix}$, and $L_{w,\mathrm{lyap}}^{(i)}=\begin{bmatrix}L_{w,\mathrm{lyap}}^{(i-1)}&l_{w,\mathrm{lyap}}^{(i)}\end{bmatrix}$.

9. Compute $t_{v,\mathrm{lyap}}^{(i)}$ and $t_{w,\mathrm{lyap}}^{(i)}$ from \eqref{tv} and \eqref{tw}, and set $t_v^{(i)}=\begin{bmatrix}t_{v,1}^{(i)}\\t_{v,2}^{(i)}\end{bmatrix}=t_{v,\mathrm{lyap}}^{(i)}$ and $t_w^{(i)}=\begin{bmatrix}t_{w,1}^{(i)}\\t_{w,2}^{(i)}\end{bmatrix}=t_{w,\mathrm{lyap}}^{(i)}$.

10. Expand $T_{v}^{(i)}=\begin{bmatrix}T_{v}^{(i-1)}&t_{v,1}^{(i)}\\0&t_{v,2}^{(i)}\end{bmatrix}$ and $T_{w}^{(i)}=\begin{bmatrix}T_{w}^{(i-1)}&t_{w,1}^{(i)}\\0&t_{w,2}^{(i)}\end{bmatrix}$.

11. Set $v_i=V_{\mathrm{lyap}}^{(i)}t_v^{(i)}$ and $w_i=\hat{W}_{\mathrm{lyap}}^{(i)}t_w^{(i)}$, compute $x_i$ from \eqref{x_case1}.

12. Expand $\bar{X}^{(i)}=\mathrm{blkdiag}(\bar{X}^{(i-1)},x_i)$, $B_r^{(i)}=\begin{bmatrix}B_r^{(i-1)}\\-x_i\end{bmatrix}$, $\hat{B}_r^{(i)}=\begin{bmatrix}\hat{B}_r^{(i-1)}\\w_i^\top\hat{B}\end{bmatrix}$, $C_r^{(i)}=\begin{bmatrix}C_r^{(i-1)}&Cv_i\end{bmatrix}$, and $\hat{C}_r^{(i)}=\begin{bmatrix}\hat{C}_r^{(i)}&-x_i\end{bmatrix}$.

13. Update $B_{\perp}^{(i)}=B_{\perp}^{(i-1)}+Ev_ix_i$, $\hat{C}_{\perp}^{(i)}=\hat{C}_{\perp}^{(i-1)}+x_iw_i^\top\hat{E}$, $\tilde{K}^{(i)}=\tilde{K}^{(i-1)}+Ev_ix_iw_i^\top\hat{B}$, and

\hspace*{0.5cm}$\bar{K}^{(i)}=\bar{K}^{(i-1)}+Cv_ix_iw_i^\top\hat{E}$, $i=i+1$.\\

14. \textbf{else if} $\mathrm{Im}(\alpha_i)\neq0$ $\land$ $\mathrm{Im}(\beta_i)\neq0$ \textbf{then}\\

15. Set $\delta_{v,i}=\frac{\mathrm{Re}(\alpha_i)}{\mathrm{Im}(\alpha_i)}$, $v_{\mathrm{lyap}}^{(i)}=\begin{bmatrix}\sqrt{2}\gamma_{v,i}\Big(\mathrm{Re}\big(y_{\mathrm{lyap}}^{(i)}\big)+\delta_{v,i}\mathrm{Im}\big(y_{\mathrm{lyap}}^{(i)}\big)\Big)& \sqrt{2}\gamma_{v,i}\sqrt{\delta_{v,i}^2+1}\mathrm{Im}\big(y_{\mathrm{lyap}}^{(i)}\big)\end{bmatrix}$, $s_{v,\mathrm{lyap}}^{(i)}=\gamma_{v,i}^2\begin{bmatrix} 
I_{m} & \frac{\sqrt{1+\delta_{v,i}^2}}{2\delta_{v,i}}I_{m} \\
-\frac{\sqrt{1+\delta_{v,i}^2}}{2\delta_{v,i}}I_{m} & 0
\end{bmatrix}$, $l_{v,\mathrm{lyap}}^{(i)}=-\sqrt{2}\gamma_{v,i}\begin{bmatrix} 
I_{m} & 0
\end{bmatrix}$, $\delta_{w,i}=\frac{\mathrm{Re}(\beta_i)}{\mathrm{Im}(\beta_i)}$,

$w_{\mathrm{lyap}}^{(i)}=\begin{bmatrix}\sqrt{2}\gamma_{w,i}\Big(\mathrm{Re}\big(z_{\mathrm{lyap}}^{(i)}\big)+\delta_{w,i}\mathrm{Im}\big(z_{\mathrm{lyap}}^{(i)}\big)\Big)& \sqrt{2}\gamma_{w,i}\sqrt{\delta_{w,i}^2+1}\mathrm{Im}\big(z_{\mathrm{lyap}}^{(i)}\big)\end{bmatrix}$,\\
$s_{w,\mathrm{lyap}}^{(i)}=\gamma_{w,i}^2\begin{bmatrix} 
I_{m} & \frac{\sqrt{1+\delta_{w,i}^2}}{2\delta_{w,i}}I_{m} \\
-\frac{\sqrt{1+\delta_{w,i}^2}}{2\delta_{w,i}}I_{m} & 0
\end{bmatrix}$, $l_{w,\mathrm{lyap}}^{(i)}=-\sqrt{2}\gamma_{w,i}\begin{bmatrix} 
I_{m} & 0
\end{bmatrix}$.

16. Update $\mathcal{B}_{\perp}^{(i)}=\mathcal{B}_{\perp}^{(i-1)}+2\gamma_{v,i}^2E\Big(\mathrm{Re}\big(y_{\mathrm{lyap}}^{(i)}\big)+\delta_{v,i}\mathrm{Im}\big(y_{\mathrm{lyap}}^{(i)}\big)\Big)$ and $\hat{\mathcal{C}}_{\perp}^{(i)}=\hat{\mathcal{C}}_{\perp}^{(i-1)}+2\gamma_{w,i}^2\Big(\mathrm{Re}\big(z_{\mathrm{lyap}}^{(i)}\big)+\delta_{w,i}\mathrm{Im}\big(z_{\mathrm{lyap}}^{(i)}\big)\Big)^\top \hat{E}$.

17. Expand $V_{\mathrm{lyap}}^{(i)}=\begin{bmatrix}V_{\mathrm{lyap}}^{(i-1)}&v_{\mathrm{lyap}}^{(i)}\end{bmatrix}$, $C_{r,\mathrm{lyap}}^{(i)}=\begin{bmatrix}C_{r,\mathrm{lyap}}^{(i-1)}&Cv_{\mathrm{lyap}}^{(i)}\end{bmatrix}$, $S_{v,\mathrm{lyap}}^{(i)}=\begin{bmatrix}S_{v,\mathrm{lyap}}^{(i-1)}&(L_{v,\mathrm{lyap}}^{(i-1)})^\top l_{v,\mathrm{lyap}}^{(i)}\\0& s_{v,\mathrm{lyap}}^{(i)}\end{bmatrix}$, $L_{v,\mathrm{lyap}}^{(i)}=\begin{bmatrix}L_{v,\mathrm{lyap}}^{(i-1)}&l_{v,\mathrm{lyap}}^{(i)}\end{bmatrix}$, $\hat{W}_{\mathrm{lyap}}^{(i)}=\begin{bmatrix}\hat{W}_{\mathrm{lyap}}^{(i-1)}&w_{\mathrm{lyap}}^{(i)}\end{bmatrix}$, $\hat{B}_{r,\mathrm{lyap}}^{(i)}=\begin{bmatrix}\hat{B}_{r,\mathrm{lyap}}^{(i-1)}\\(w_{\mathrm{lyap}}^{(i)})^\top\hat{B}\end{bmatrix}$, \\ $S_{w,\mathrm{lyap}}^{(i)}=\begin{bmatrix}S_{w,\mathrm{lyap}}^{(i-1)}&(L_{w,\mathrm{lyap}}^{(i-1)})^\top l_{w,\mathrm{lyap}}^{(i)}\\0& s_{w,\mathrm{lyap}}^{(i)}\end{bmatrix}$, and $L_{w,\mathrm{lyap}}^{(i)}=\begin{bmatrix}L_{w,\mathrm{lyap}}^{(i-1)}&l_{w,\mathrm{lyap}}^{(i)}\end{bmatrix}$.

18. Compute $t_{v,\mathrm{lyap}}^{(i)}$ and $t_{w,\mathrm{lyap}}^{(i)}$ from \eqref{tv} and \eqref{tw}, and set $t_v^{(i)}=\begin{bmatrix}t_{v,1}^{(i)}\\t_{v,2}^{(i)}\end{bmatrix}=\begin{bmatrix}\mathrm{Re}\big(t_{v,\mathrm{lyap}}^{(i)}\big)&\mathrm{Im}\big(t_{v,\mathrm{lyap}}^{(i)}\big)\end{bmatrix}$ and $t_w^{(i)}=\begin{bmatrix}t_{w,1}^{(i)}\\t_{w,2}^{(i)}\end{bmatrix}=\begin{bmatrix}\mathrm{Re}\big(t_{w,\mathrm{lyap}}^{(i)}\big)&\mathrm{Im}\big(t_{w,\mathrm{lyap}}^{(i)}\big)\end{bmatrix}$.

19. Expand $T_{v}^{(i)}=\begin{bmatrix}T_{v}^{(i-1)}&t_{v,1}^{(i)}\\0&t_{v,2}^{(i)}\end{bmatrix}$ and $T_{w}^{(i)}=\begin{bmatrix}T_{w}^{(i-1)}&t_{w,1}^{(i)}\\0&t_{w,2}^{(i)}\end{bmatrix}$.

20. Set $v_i=\begin{bmatrix}\mathrm{Re}(y_i)&\mathrm{Im}(y_i)\end{bmatrix}=V_{\mathrm{lyap}}^{(i)}t_v^{(i)}$ and $w_i=\begin{bmatrix}\mathrm{Re}(z_i)&\mathrm{Im}(z_i)\end{bmatrix}=\hat{W}_{\mathrm{lyap}}^{(i)}t_w^{(i)}$, compute $x_i$ from \eqref{x_case2_f}-\eqref{x_case2_l}.

21. Expand $\bar{X}^{(i)}=\mathrm{blkdiag}(\bar{X}^{(i-1)},x_i)$, $B_r^{(i)}=\begin{bmatrix}B_r^{(i-1)}\\x_i\begin{bmatrix}-I_m\\0\end{bmatrix}\end{bmatrix}$, $\hat{B}_r^{(i)}=\begin{bmatrix}\hat{B}_r^{(i-1)}\\w_i^\top\hat{B}\end{bmatrix}$, $C_r^{(i)}=\begin{bmatrix}C_r^{(i-1)}&Cv_i\end{bmatrix}$, and $\hat{C}_r^{(i)}=\begin{bmatrix}\hat{C}_r^{(i)}&\begin{bmatrix}-I_m&0\end{bmatrix}x_i\end{bmatrix}$.

22. Update $B_{\perp}^{(i)}=B_{\perp}^{(i-1)}+Ev_ix_i$, $\hat{C}_{\perp}^{(i)}=\hat{C}_{\perp}^{(i-1)}+x_iw_i^\top\hat{E}$, $\tilde{K}^{(i)}=\tilde{K}^{(i-1)}+Ev_ix_iw_i^\top\hat{B}$, and

\hspace*{0.5cm}$\bar{K}^{(i)}=\bar{K}^{(i-1)}+Cv_ix_iw_i^\top\hat{E}$, $i=i+2$.\\

23. \textbf{else if} $\mathrm{Im}(\alpha_i)\neq0$ $\land$ $\mathrm{Im}(\beta_i)=0$ $\land$ $\mathrm{Im}(\beta_{i+1})=0$ \textbf{then}\\

24. Set $\delta_{v,i}=\frac{\mathrm{Re}(\alpha_i)}{\mathrm{Im}(\alpha_i)}$, $v_{\mathrm{lyap}}^{(i)}=\begin{bmatrix}\sqrt{2}\gamma_{v,i}\Big(\mathrm{Re}\big(y_{\mathrm{lyap}}^{(i)}\big)+\delta_{v,i}\mathrm{Im}\big(y_{\mathrm{lyap}}^{(i)}\big)\Big)& \sqrt{2}\gamma_{v,i}\sqrt{\delta_{v,i}^2+1}\mathrm{Im}\big(y_{\mathrm{lyap}}^{(i)}\big)\end{bmatrix}$, $s_{v,\mathrm{lyap}}^{(i)}=\gamma_{v,i}^2\begin{bmatrix} 
I_{m} & \frac{\sqrt{1+\delta_{v,i}^2}}{2\delta_{v,i}}I_{m} \\
-\frac{\sqrt{1+\delta_{v,i}^2}}{2\delta_{v,i}}I_{m} & 0
\end{bmatrix}$, and $l_{v,\mathrm{lyap}}^{(i)}=-\sqrt{2}\gamma_{v,i}\begin{bmatrix} 
I_{m} & 0
\end{bmatrix}$.

25. Update $\mathcal{B}_{\perp}^{(i)}=\mathcal{B}_{\perp}^{(i-1)}+2\gamma_{v,i}^2E\Big(\mathrm{Re}\big(y_{\mathrm{lyap}}^{(i)}\big)+\delta_{v,i}\mathrm{Im}\big(y_{\mathrm{lyap}}^{(i)}\big)\Big)$.

26. Expand $V_{\mathrm{lyap}}^{(i)}=\begin{bmatrix}V_{\mathrm{lyap}}^{(i-1)}&v_{\mathrm{lyap}}^{(i)}\end{bmatrix}$, $C_{r,\mathrm{lyap}}^{(i)}=\begin{bmatrix}C_{r,\mathrm{lyap}}^{(i-1)}&Cv_{\mathrm{lyap}}^{(i)}\end{bmatrix}$, $S_{v,\mathrm{lyap}}^{(i)}=\begin{bmatrix}S_{v,\mathrm{lyap}}^{(i-1)}&(L_{v,\mathrm{lyap}}^{(i-1)})^\top l_{v,\mathrm{lyap}}^{(i)}\\0& s_{v,\mathrm{lyap}}^{(i)}\end{bmatrix}$, and $L_{v,\mathrm{lyap}}^{(i)}=\begin{bmatrix}L_{v,\mathrm{lyap}}^{(i-1)}&l_{v,\mathrm{lyap}}^{(i)}\end{bmatrix}$.

27. Set $w_{\mathrm{lyap}}^{(i)}=\gamma_{w,i}z_{\mathrm{lyap}}^{(i)}$, $s_{w,\mathrm{lyap}}^{(i)}=-\beta_i I_m$, and $l_{w,\mathrm{lyap}}^{(i)}=-\gamma_{w,i} I_m$.

28. Update $\hat{\mathcal{C}}_{\perp}^{(i)}=\hat{\mathcal{C}}_{\perp}^{(i-1)}+\gamma_{w,i}^2(z_{\mathrm{lyap}}^{(i)})^\top \hat{E}$.

29. Expand $\hat{W}_{\mathrm{lyap}}^{(i)}=\begin{bmatrix}\hat{W}_{\mathrm{lyap}}^{(i-1)}&w_{\mathrm{lyap}}^{(i)}\end{bmatrix}$, $\hat{B}_{r,\mathrm{lyap}}^{(i)}=\begin{bmatrix}\hat{B}_{r,\mathrm{lyap}}^{(i-1)}\\(w_{\mathrm{lyap}}^{(i)})^\top\hat{B}\end{bmatrix}$, $S_{w,\mathrm{lyap}}^{(i)}=\begin{bmatrix}S_{w,\mathrm{lyap}}^{(i-1)}&(L_{w,\mathrm{lyap}}^{(i-1)})^\top l_{w,\mathrm{lyap}}^{(i)}\\0& s_{w,\mathrm{lyap}}^{(i)}\end{bmatrix}$, and $L_{w,\mathrm{lyap}}^{(i)}=\begin{bmatrix}L_{w,\mathrm{lyap}}^{(i-1)}&l_{w,\mathrm{lyap}}^{(i)}\end{bmatrix}$.

30. Solve for $z_{\mathrm{lyap}}^{(i)}$: $\big(\hat{A}^\top+\beta_{i+1} \hat{E}^\top\big)z_{\mathrm{lyap}}^{(i)}=\big(\hat{\mathcal{C}}_{\perp}^{(i)}\big)^\top$.

31. Set $\gamma_{w,i}=\sqrt{-2\mathrm{Re}(\beta_{i+1})}$ and $w_{\mathrm{lyap}}^{(i)}=\gamma_{w,i}z_{\mathrm{lyap}}^{(i)}$, $s_{w,\mathrm{lyap}}^{(i)}=-\beta_{i+1} I_m$, and $l_{w,\mathrm{lyap}}^{(i)}=-\gamma_{w,i} I_m$.

32. Update $\hat{\mathcal{C}}_{\perp}^{(i)}=\hat{\mathcal{C}}_{\perp}^{(i)}+\gamma_{w,i}^2(z_{\mathrm{lyap}}^{(i)})^\top \hat{E}$.

33. Expand $\hat{W}_{\mathrm{lyap}}^{(i)}=\begin{bmatrix}\hat{W}_{\mathrm{lyap}}^{(i-1)}&w_{\mathrm{lyap}}^{(i)}\end{bmatrix}$, $\hat{B}_{r,\mathrm{lyap}}^{(i)}=\begin{bmatrix}\hat{B}_{r,\mathrm{lyap}}^{(i-1)}\\(w_{\mathrm{lyap}}^{(i)})^\top\hat{B}\end{bmatrix}$, $S_{w,\mathrm{lyap}}^{(i)}=\begin{bmatrix}S_{w,\mathrm{lyap}}^{(i-1)}&(L_{w,\mathrm{lyap}}^{(i-1)})^\top l_{w,\mathrm{lyap}}^{(i)}\\0& s_{w,\mathrm{lyap}}^{(i)}\end{bmatrix}$, and $L_{w,\mathrm{lyap}}^{(i)}=\begin{bmatrix}L_{w,\mathrm{lyap}}^{(i-1)}&l_{w,\mathrm{lyap}}^{(i)}\end{bmatrix}$.

34. Compute $t_{v,\mathrm{lyap}}^{(i)}$, $t_{w,\mathrm{lyap}}^{(i)}$, and $t_{w,\mathrm{lyap}}^{\prime(i)}$ from \eqref{tv}, \eqref{tw}, and \eqref{twp}, and set\\ $t_v^{(i)}=\begin{bmatrix}t_{v,1}^{(i)}\\t_{v,2}^{(i)}\end{bmatrix}=\begin{bmatrix}\mathrm{Re}\big(t_{v,\mathrm{lyap}}^{(i)}\big)&\mathrm{Im}\big(t_{v,\mathrm{lyap}}^{(i)}\big)\end{bmatrix}$ and $t_w^{(i)}=\begin{bmatrix}t_{w,1}^{(i)}\\t_{w,2}^{(i)}\end{bmatrix}=\begin{bmatrix}t_{w,\mathrm{lyap}}^{(i)}&t_{w,\mathrm{lyap}}^{\prime(i)}\end{bmatrix}$.

35. Set $v_i=\begin{bmatrix}\mathrm{Re}(y_i)&\mathrm{Im}(y_i)\end{bmatrix}=V_{\mathrm{lyap}}^{(i)}t_v^{(i)}$ and $w_i=\begin{bmatrix}z_i&z_i^{\prime}\end{bmatrix}=\hat{W}_{\mathrm{lyap}}^{(i)}t_w^{(i)}$, and compute $x_i$ from \eqref{x_case3_f}-\eqref{x_case3_l}.

36. Expand $\bar{X}^{(i)}=\mathrm{blkdiag}(\bar{X}^{(i-1)},x_i)$, $B_r^{(i)}=\begin{bmatrix}B_r^{(i-1)}\\x_i\begin{bmatrix}-I_m\\0\end{bmatrix}\end{bmatrix}$, $\hat{B}_r^{(i)}=\begin{bmatrix}\hat{B}_r^{(i-1)}\\w_i^\top\hat{B}\end{bmatrix}$, $C_r^{(i)}=\begin{bmatrix}C_r^{(i-1)}&Cv_i\end{bmatrix}$, and $\hat{C}_r^{(i)}=\begin{bmatrix}\hat{C}_r^{(i)}&\begin{bmatrix}-I_m&0\end{bmatrix}x_i\end{bmatrix}$.

37. Update $B_{\perp}^{(i)}=B_{\perp}^{(i-1)}+Ev_ix_i\begin{bmatrix}I_m\\0\end{bmatrix}$, $\hat{C}_{\perp}^{(i)}=\hat{C}_{\perp}^{(i-1)}+\begin{bmatrix}I_m&0\end{bmatrix}x_iw_i^\top\hat{E}$, $\tilde{K}^{(i)}=\tilde{K}^{(i-1)}+Ev_ix_iw_i^\top\hat{B}$,

\hspace*{0.5cm}and $\bar{K}^{(i)}=\bar{K}^{(i-1)}+Cv_ix_iw_i^\top\hat{E}$, $i=i+2$.\\

38. \textbf{else if} $\mathrm{Im}(\alpha_i)=0$ $\land$ $\mathrm{Im}(\alpha_{i+1})=0$ $\land$ $\mathrm{Im}(\beta_{i})\neq0$ \textbf{then}\\

39. Set $v_{\mathrm{lyap}}^{(i)}=\gamma_{v,i}y_{\mathrm{lyap}}^{(i)}$, $s_{v,\mathrm{lyap}}^{(i)}=-\alpha_i I_m$, and $l_{v,\mathrm{lyap}}^{(i)}=-\gamma_{v,i} I_m$.

40. Update $\mathcal{B}_{\perp}^{(i)}=\mathcal{B}_{\perp}^{(i-1)}+\gamma_{v,i}^2Ey_{\mathrm{lyap}}^{(i)}$.

41. Expand $V_{\mathrm{lyap}}^{(i)}=\begin{bmatrix}V_{\mathrm{lyap}}^{(i-1)}&v_{\mathrm{lyap}}^{(i)}\end{bmatrix}$, $C_{r,\mathrm{lyap}}^{(i)}=\begin{bmatrix}C_{r,\mathrm{lyap}}^{(i-1)}&Cv_{\mathrm{lyap}}^{(i)}\end{bmatrix}$, $S_{v,\mathrm{lyap}}^{(i)}=\begin{bmatrix}S_{v,\mathrm{lyap}}^{(i-1)}&(L_{v,\mathrm{lyap}}^{(i-1)})^\top l_{v,\mathrm{lyap}}^{(i)}\\0& s_{v,\mathrm{lyap}}^{(i)}\end{bmatrix}$, and $L_{v,\mathrm{lyap}}^{(i)}=\begin{bmatrix}L_{v,\mathrm{lyap}}^{(i-1)}&l_{v,\mathrm{lyap}}^{(i)}\end{bmatrix}$.

42. Solve for $y_{\mathrm{lyap}}^{(i)}$: $\big(A+\alpha_{i+1} E\big)y_{\mathrm{lyap}}^{(i)}=\mathcal{B}_{\perp}^{(i)}$.

43. Set $\gamma_{v,i}=\sqrt{-2\mathrm{Re}(\alpha_{i+1})}$, $v_{\mathrm{lyap}}^{(i)}=\gamma_{v,i}y_{\mathrm{lyap}}^{(i)}$, $s_{v,\mathrm{lyap}}^{(i)}=-\alpha_{i+1} I_m$, and $l_{v,\mathrm{lyap}}^{(i)}=-\gamma_{v,i} I_m$.

44. Update $\mathcal{B}_{\perp}^{(i)}=\mathcal{B}_{\perp}^{(i)}+\gamma_{v,i}^2Ey_{\mathrm{lyap}}^{(i)}$.

45. Expand $V_{\mathrm{lyap}}^{(i)}=\begin{bmatrix}V_{\mathrm{lyap}}^{(i-1)}&v_{\mathrm{lyap}}^{(i)}\end{bmatrix}$, $C_{r,\mathrm{lyap}}^{(i)}=\begin{bmatrix}C_{r,\mathrm{lyap}}^{(i-1)}&Cv_{\mathrm{lyap}}^{(i)}\end{bmatrix}$, $S_{v,\mathrm{lyap}}^{(i)}=\begin{bmatrix}S_{v,\mathrm{lyap}}^{(i-1)}&(L_{v,\mathrm{lyap}}^{(i-1)})^\top l_{v,\mathrm{lyap}}^{(i)}\\0& s_{v,\mathrm{lyap}}^{(i)}\end{bmatrix}$, and $L_{v,\mathrm{lyap}}^{(i)}=\begin{bmatrix}L_{v,\mathrm{lyap}}^{(i-1)}&l_{v,\mathrm{lyap}}^{(i)}\end{bmatrix}$.

46. Set $\delta_{w,i}=\frac{\mathrm{Re}(\beta_i)}{\mathrm{Im}(\beta_i)}$, $w_{\mathrm{lyap}}^{(i)}=\begin{bmatrix}\sqrt{2}\gamma_{w,i}\Big(\mathrm{Re}\big(z_{\mathrm{lyap}}^{(i)}\big)+\delta_{w,i}\mathrm{Im}\big(z_{\mathrm{lyap}}^{(i)}\big)\Big)& \sqrt{2}\gamma_{w,i}\sqrt{\delta_{w,i}^2+1}\mathrm{Im}\big(z_{\mathrm{lyap}}^{(i)}\big)\end{bmatrix}$, $s_{w,\mathrm{lyap}}^{(i)}=\gamma_{w,i}^2\begin{bmatrix} 
I_{m} & \frac{\sqrt{1+\delta_{w,i}^2}}{2\delta_{w,i}}I_{m} \\
-\frac{\sqrt{1+\delta_{w,i}^2}}{2\delta_{w,i}}I_{m} & 0
\end{bmatrix}$, $l_{w,\mathrm{lyap}}^{(i)}=-\sqrt{2}\gamma_{w,i}\begin{bmatrix} 
I_{m} & 0
\end{bmatrix}$.

47. Update $\hat{\mathcal{C}}_{\perp}^{(i)}=\hat{\mathcal{C}}_{\perp}^{(i-1)}+2\gamma_{w,i}^2\Big(\mathrm{Re}\big(z_{\mathrm{lyap}}^{(i)}\big)+\delta_{w,i}\mathrm{Im}\big(z_{\mathrm{lyap}}^{(i)}\big)\Big)^\top \hat{E}$.

48. $\hat{W}_{\mathrm{lyap}}^{(i)}=\begin{bmatrix}\hat{W}_{\mathrm{lyap}}^{(i-1)}&w_{\mathrm{lyap}}^{(i)}\end{bmatrix}$, $\hat{B}_{r,\mathrm{lyap}}^{(i)}=\begin{bmatrix}\hat{B}_{r,\mathrm{lyap}}^{(i-1)}\\(w_{\mathrm{lyap}}^{(i)})^\top\hat{B}\end{bmatrix}$, $S_{w,\mathrm{lyap}}^{(i)}=\begin{bmatrix}S_{w,\mathrm{lyap}}^{(i-1)}&(L_{w,\mathrm{lyap}}^{(i-1)})^\top l_{w,\mathrm{lyap}}^{(i)}\\0& s_{w,\mathrm{lyap}}^{(i)}\end{bmatrix}$, and $L_{w,\mathrm{lyap}}^{(i)}=\begin{bmatrix}L_{w,\mathrm{lyap}}^{(i-1)}&l_{w,\mathrm{lyap}}^{(i)}\end{bmatrix}$.

49. Compute $t_{v,\mathrm{lyap}}^{(i)}$, $t_{v,\mathrm{lyap}}^{\prime(i)}$, and $t_{w,\mathrm{lyap}}^{(i)}$ from \eqref{tv}, \eqref{tvp}, and \eqref{tw}, and set 
$t_v^{(i)}=\begin{bmatrix}t_{v,1}^{(i)}\\t_{v,2}^{(i)}\end{bmatrix}=\begin{bmatrix}t_{v,\mathrm{lyap}}^{(i)}&t_{v,\mathrm{lyap}}^{\prime(i)}\end{bmatrix}$ and
$t_w^{(i)}=\begin{bmatrix}t_{w,1}^{(i)}\\t_{w,2}^{(i)}\end{bmatrix}=\begin{bmatrix}\mathrm{Re}\big(t_{w,\mathrm{lyap}}^{(i)}\big)&\mathrm{Im}\big(t_{w,\mathrm{lyap}}^{(i)}\big)\end{bmatrix}$.

50. Expand $T_{v}^{(i)}=\begin{bmatrix}T_{v}^{(i-1)}&t_{v,1}^{(i)}\\0&t_{v,2}^{(i)}\end{bmatrix}$ and $T_{w}^{(i)}=\begin{bmatrix}T_{w}^{(i-1)}&t_{w,1}^{(i)}\\0&t_{w,2}^{(i)}\end{bmatrix}$.

51. Set $v_i=\begin{bmatrix}y_i&y_i^{\prime}\end{bmatrix}$ and $w_i=\begin{bmatrix}\mathrm{Re}(z_i)&\mathrm{Im}(z_i)\end{bmatrix}$, and compute $x_i$ from \eqref{x_case4_f}-\eqref{x_case4_l}.

52. Expand $\bar{X}^{(i)}=\mathrm{blkdiag}(\bar{X}^{(i-1)},x_i)$, $B_r^{(i)}=\begin{bmatrix}B_r^{(i-1)}\\x_i\begin{bmatrix}-I_m\\0\end{bmatrix}\end{bmatrix}$, $\hat{B}_r^{(i)}=\begin{bmatrix}\hat{B}_r^{(i-1)}\\w_i^\top\hat{B}\end{bmatrix}$, $C_r^{(i)}=\begin{bmatrix}C_r^{(i-1)}&Cv_i\end{bmatrix}$, and $\hat{C}_r^{(i)}=\begin{bmatrix}\hat{C}_r^{(i)}&\begin{bmatrix}-I_m&0\end{bmatrix}x_i\end{bmatrix}$.

53. Update $B_{\perp}^{(i)}=B_{\perp}^{(i-1)}+Ev_ix_i\begin{bmatrix}I_m\\0\end{bmatrix}$, $\hat{C}_{\perp}^{(i)}=\hat{C}_{\perp}^{(i-1)}+\begin{bmatrix}I_m&0\end{bmatrix}x_iw_i^\top\hat{E}$, $\tilde{K}^{(i)}=\tilde{K}^{(i-1)}+Ev_ix_iw_i^\top\hat{B}$,

\hspace*{0.5cm}and $\bar{K}^{(i)}=\bar{K}^{(i-1)}+Cv_ix_iw_i^\top\hat{E}$, $i=i+2$.

54. \textbf{end if}

55. \textbf{end while}

\noindent\rule{\textwidth}{0.8pt}
\subsection{Automatic Shift Generation}
The Lyapunov equation, Sylvester equation, and symmetric Riccati equation can all be seen as special cases of the NARE. Consequently, N-RADI is arguably the most general low-rank ADI algorithm, as the low-rank ADI methods in \cite{benner2014computing,benner2013reformulated,zulfiqar2026ldl,benner2013efficient,benner2009adi,benner2018radi} are special cases of N-RADI. We now discuss automatic shift generation in N-RADI, which will essentially cover shift generation in all other low-rank ADI algorithms. The discussion in the sequel closely follows that in \cite{zulfiqar2025unified}. For simplicity, in the subsequent discussion we assume that $E^{-1}A$ and $\hat{A}\hat{E}^{-1}$ are Hurwitz.

Let us define $G_{\perp}^{(i)}(s)$ and $\hat{G}_{\perp}^{(i)}(s)$ as follows:
\begin{align}
G_{\perp}^{(i)}(s)=C(sE-A)^{-1}B_{\perp}^{(i)}\quad \text{and}\quad \hat{G}_{\perp}^{(i)}(s)=\hat{C}_{\perp}^{(i)}(s\hat{E}-\hat{A})^{-1}\hat{B}.
\end{align}
It can readily be noted that
\begin{align}
G_{\perp}^{(i)}(s)\hat{G}_{\perp}^{(i)}(s)=C(sE-A)^{-1}R_x^{(i)}(s\hat{E}-\hat{A})^{-1}\hat{B}.\label{obsv1}
\end{align}
Then the errors $G(s)-G_r^{(i)}(s)$ and $\hat{G}(s)-\hat{G}_r^{(i)}(s)$ can be factorized as follows:
\begin{align}
G(s)-G_r^{(i)}(s)&=G_{\perp}^{(i)}(s)\Big(-L_v^{(i)}\big(sI-A_r^{(i)}\big)^{-1}B_r^{(i)}+I\Big),\\
\hat{G}(s)-\hat{G}_r^{(i)}(s)&=\Big(-\hat{C}_r^{(i)}\big(sI-\hat{A}_r^{(i)}\big)^{-1}(L_w^{(i)})^\top+I\Big)\hat{G}_{\perp}^{(i)}(s).
\end{align}
It is shown in Lemma 3.3 of \cite{wolfthesis} that these errors are zero when all the poles of the realization $(E,A,B_{\perp}^{(i)},C)$ are uncontrollable and all the poles of the realization $(\hat{E},\hat{A},\hat{B},\hat{C}_{\perp}^{(i)})$ are unobservable. N-RADI is essentially a recursive interpolation algorithm that simultaneously solves two different model order reduction problems. The peaks in the frequency domain plot of $G(s)$ associated with the most controllable modes are flattened out in $G_{\perp}^{(i)}(s)$. Note that $G(s)$ and $G_{\perp}^{(i)}(s)$ have the same poles. When the most controllable poles of $G(s)$ are flattened out in $G_{\perp}^{(i)}(s)$, the error $G(s)-G_r^{(i)}(s)$ drops significantly and all the poles of $G_{\perp}^{(i)}(s)$ become poorly controllable. In the eigenvalue problem literature \cite{rommes2007methods,saad2011numerical}, replacing $B$ with $B_{\perp}^{(i)}$ to obtain $G_{\perp}^{(i)}(s)$ is called deflation, which is used in dominant poles estimation algorithms \cite{rommes2006efficient,rommes2008convergence,martins1996computing,mengi2022large} to stop targeting the dominant poles that are already captured, since such poles become poorly controllable in $G_{\perp}^{(i)}(s)$.

It can be noted from \eqref{obsv1} that a small residual $R_x^{(i)}$ and a small norm of $G_{\perp}^{(i)}(s)\hat{G}_{\perp}^{(i)}(s)$ are directly related. From the analysis of the weighted $\mathcal{H}_2$ norm in \cite{anic2013interpolatory,breiten2015near}, note that the $\mathcal{H}_2$ norm of $G_{\perp}^{(i)}(s)\hat{G}_{\perp}^{(i)}(s)$ is dominated by the most controllable poles of $G_{\perp}^{(i)}(s)$ and the most observable poles of $\hat{G}_{\perp}^{(i)}(s)$. Also, note that a peak in the frequency domain of $G(s)$ which is flattened out in $G_{\perp}^{(i)}(s)$ can be emphasized by a peak in $\hat{G}_{\perp}^{(i)}(s)$ if this peak is not captured in $\hat{G}_r(s)$. Similarly, a peak in the frequency domain of $\hat{G}(s)$ which is flattened out in $\hat{G}_{\perp}^{(i)}(s)$ can be emphasized by a peak in $G_{\perp}^{(i)}(s)$ if this peak is not captured in $G_r^{(i)}(s)$. Thus it is important to interpolate $G(s)$ at the mirror images of its most controllable poles as well as the most observable poles of $\hat{G}(s)$; see \cite{zulfiqar2025unified} for details. To avoid this situation, one can restrict the shifts to $\alpha_i=\beta_i$ as done in \cite{zulfiqar2025unified} for Sylvester equations.

The interpolatory behavior of subspace accelerated dominant pole estimation (SADPA) \cite{rommes2006efficient} and low-rank ADI methods is very similar \cite{zulfiqar2025unified,mengi2022large}. The main difference is that SADPA applies deflation only after capturing the peak associated with dominant poles, whereas low-rank ADI methods deflate in every iteration. This premature deflation results in inferior approximation of the dominant poles as reported in \cite{zulfiqar2025unified}. However, this is not an issue as such since the main goal of low-rank ADI methods is to reduce the residual of the matrix equation, not to capture dominant poles. Nevertheless, we can automatically generate the ADI shifts similarly to how shifts are generated automatically in SADPA \cite{rommes2006efficient}, as done in \cite{zulfiqar2025unified}.

We propose setting  
\[
V_{\mathrm{proj}} = \mathrm{orth}\big(\begin{bmatrix} v_1 & \cdots & v_i \end{bmatrix}\big)
\]  
with an implicit restart mechanism used in SADPA \cite{rommes2006efficient}, i.e., if the number of columns of $V_{\mathrm{proj}}$ exceeds a set limit, the previous history of $v_i$ is discarded and new history is collected. Implicit restart is used in several eigenvalue solvers \cite{saad2011numerical} to keep the number of columns of $V_{\mathrm{proj}}$ within an acceptable limit.

Next, project as follows  
\[
E_{\mathrm{proj}} = V_{\mathrm{proj}}^\top E V_{\mathrm{proj}}, \quad
A_{\mathrm{proj}} = V_{\mathrm{proj}}^\top A V_{\mathrm{proj}}, \quad
B_{\mathrm{proj}} = V_{\mathrm{proj}}^\top B_{\perp}^{(i)}.
\]  
Compute the eigenvalue decomposition of $E_{\mathrm{proj}}^{-1}A_{\mathrm{proj}}$ as  
$E_{\mathrm{proj}}^{-1}A_{\mathrm{proj}} = T\,\mathrm{diag}\big(\lambda_1,\dots,\lambda_k\big)T^{-1}$. Define $r_{b,l} = T^{-1}(l,:)\,B_{\mathrm{proj}}$.  
The most controllable pole of $E_{\mathrm{proj}}^{-1}A_{\mathrm{proj}}$ is the pole $\lambda_l$ associated with the largest value of  
\[
\phi_l = \frac{\|r_{b,l}\|_2^2}{|\mathrm{Re}(\lambda_l)|},
\]  
as in \cite{rommes2007methods,mengi2022large}. Then use the most controllable pole of these projected matrices as the ADI shifts $\alpha_i$ and $\beta_i$, ensuring $\alpha_i = \beta_i$.

In the subsequent iteration, select  
\[
\hat{W}_{\mathrm{proj}} = \mathrm{orth}\big(\begin{bmatrix} w_1 & \cdots & w_i \end{bmatrix}\big)
\]  
with implicit restart, and project as:  
\[
\hat{E}_{\mathrm{proj}} = \hat{W}_{\mathrm{proj}}^\top \hat{E} \hat{W}_{\mathrm{proj}}, \quad
\hat{A}_{\mathrm{proj}} = \hat{W}_{\mathrm{proj}}^\top \hat{A} \hat{W}_{\mathrm{proj}}, \quad
\hat{C}_{\mathrm{proj}} = \hat{C}_{\perp}^{(i)} \hat{W}_{\mathrm{proj}}.
\]  
Compute the eigenvalue decomposition of $\hat{A}_{\mathrm{proj}}\hat{E}_{\mathrm{proj}}^{-1}$:  
$\hat{A}_{\mathrm{proj}}\hat{E}_{\mathrm{proj}}^{-1} = \hat{T}\,\mathrm{diag}\big(\hat{\lambda}_1,\dots,\hat{\lambda}_r\big)\hat{T}^{-1}$.  
Define $\hat{r}_{c,l} = \hat{C}_{\mathrm{proj}}\,\hat{T}(:,l)$.  
The most observable pole of $\hat{A}_{\mathrm{proj}}\hat{E}_{\mathrm{proj}}^{-1}$ is the pole $\hat{\lambda}_l$ corresponding to the largest  
\[
\hat{\phi}_l = \frac{\|\hat{r}_{c,l}\|_2^2}{|\mathrm{Re}(\hat{\lambda}_l)|},
\]  
as in \cite{rommes2007methods,mengi2022large}. Use the most observable pole of these projected matrices as the common ADI shift $\alpha_i = \beta_i$.
\subsection{Sample MATLAB-based Implementations}
The sample MATLAB-based implementations of N-RADI and UN-RADI are given in the appendices. Both implementations are equipped with the subspace acceleration-based shift generation strategy described in the previous subsection. If the user does not provide any precomputed shifts, the MATLAB functions generate the ADI shifts automatically once an initial first shift is provided by the user. If the appropriate flag is set to $1$, the implementation of N-RADI also produces $S_v^{(i)}$, $L_v^{(i)}$, $S_w^{(i)}$, and $L_w^{(i)}$ for verifying various mathematical properties mentioned in this section.
\section{Numerical Results}
The numerical performance of the proposed algorithms is evaluated using semi-discretized heat transfer models for optimal cooling of steel profiles \cite{benner2005semi}, also referred to as the rail models in the literature. The matrices involved in the NARE have the following dimensions: $E\in\mathbb{R}^{12,65,537\times 12,65,537}$, $A\in\mathbb{R}^{12,65,537\times 12,65,537}$, $B\in\mathbb{R}^{12,65,537\times7}$, $C\in\mathbb{R}^{6\times 12,65,537}$, $\hat{E}\in\mathbb{R}^{3,17,377\times 3,17,377}$, $\hat{A}\in\mathbb{R}^{3,17,377\times 3,17,377}$, $\hat{B}\in\mathbb{R}^{3,17,377\times 6}$, and $\hat{C}\in\mathbb{R}^{7\times 3,17,377}$. MATLAB codes to reproduce the results are publicly available at \cite{mycode}. The initial shifts in N-RADI and UN-RADI are set to $\alpha_1=-0.001$ and $\beta_1=-0.001$. The ADI shifts are generated automatically using the subspace accelerated strategy described in the previous subsection. The maximum number of columns in $V_{\mathrm{proj}}$ and $W_{\mathrm{proj}}$ is set to $14$, after which implicit restart is applied to keep the eigenvalue decomposition for the shifts at order $14$. The maximum allowed number of iterations is $100$, and the convergence tolerance $\tau$ is set to $10^{-10}$. All tests are performed using MATLAB R2025b on a Windows 11 laptop with 32 GB of RAM and an Intel(R) Core(TM) Ultra 9 285H 2.9 GHz processor.

N-RADI converged in 57 iterations taking 274.7042 seconds, while UN-RADI required 376.7226 seconds. In this test, avoiding the SMW formula did not reduce the overall computational cost. However, as $p$ increases, UN-RADI is expected to outperform N-RADI in terms of computational efficiency. The decay of the residual is shown in Figure \ref{fig1}.
\begin{figure}[!h]
  \centering
  \includegraphics[width=16cm]{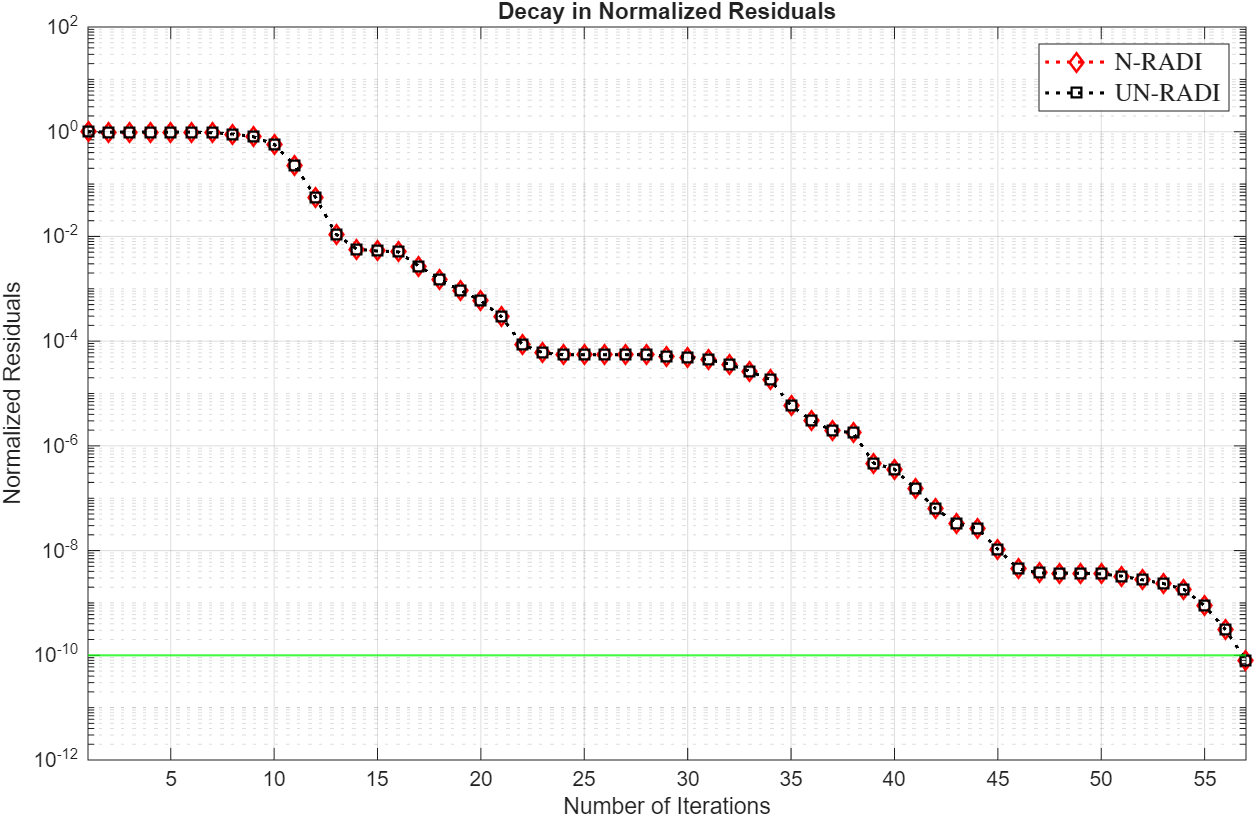}
  \caption{Normalized residual for \(X\)}\label{fig1}
\end{figure}
\section{Conclusion}
A low-rank ADI algorithm for solving large-scale NAREs is presented. The mathematical properties and residual expression for tracking the quality of approximation in large-scale setting are also discussed. Two implementations of the algorithm are presented. One uses the SMW formula while the other uses the low-rank ADI algorithm for Lyapunov equations as a base algorithm to avoid the SMW formula and extract the low-rank factors of the solution to the NAREs from the low-rank factors of the solution to the Lyapunov equations. A self-generating shift strategy is also proposed that automatically generates shifts without any user interference. A numerical example of order \(10^6\) comprising benchmark data is presented which shows that the proposed algorithm is computationally efficient and accurate when the proposed automatic shift generation is used.
\section*{Appendix A: MATLAB Implementation of N-RADI}
\begin{verbatim}
function [V,W,X,K,Kh,Res,a_used,b_used,Sv,Lv,Sw,Lw] = ...
         N_RADI(E,A,B,C,Eh,Ah,Bh,Ch,a,b,a_in,b_in,tol,kmax,rank_max,flag_s)

% Low-rank ADI Algorithm for Large-scale Non-symmetric Algebraic Riccati
% Equations (NARE)
% It solves the NARE 
% A Q Eh + E Q Ah - E Q Bh C Q Eh + B Ch =0
% Q \approx V X W'
% K \approx K_gain = E Q Bh
% Kh \approx Kh_gain = C Q Eh
% Author: Umair Zulfiqar
% Date: 20th April 2026


if any(real(a) >= -1e-8) || any(real(a_in) >= -1e-8)
    disp('Error: All the ADI shifts must have negative real parts')
    disp('Fix it and try again!')
    return
end


%% Initialization
[n,m]=size(B); p=size(C,1); nh=size(Bh,1); Im=eye(m);

if ~isempty(a) || ~isempty(b)
    kmax = length(a);
else
    a=a_in; b=b_in;
end
max_cols = (kmax+2) * m;
V = zeros(n, max_cols);
W = zeros(nh, max_cols);
V_proj=[]; W_proj=[];
X = zeros(max_cols, max_cols);

B_=B; C_ = Ch;
col_idx = 1;
k = 1; r_idx_v = 1; r_idx_w = 1;
Sv=[]; Lv=[]; Sw=[]; Lw=[];
K=zeros(n,p); Kh=zeros(p,nh);
flag=1; res=1; Res=[]; itr=1; c_alt=1;
%% Iterations

while flag

    if itr>k
        disp('ADI Shifts are not ordered properly')
        flag=0;
        break
    end
    
    Iteration_No=itr

    if k>kmax
        disp('Maximum iteration Reached')
        flag=0;
        break
    end

    if res<tol
        disp('N-RADI Converged')
        flag=0;
        break
    end
   
    ak = a(k); bk = b(k);

    if k<length(a)
        ak1=a(k+1); bk1=b(k+1);
    end
   
    if k==1
        vk=(A+ak*E)\B_;
        wk=(Ah'+bk*Eh')\(C_');
    else
        vk = solve_shifted_smw(A, E, K, C, B_, p, ak);
        wk = solve_shifted_smw(Ah', Eh', Kh', Bh', C_', p, bk);
    end
    
    if flag_s
        curr_cols = col_idx - 1;
        V_curr = V(:, 1:curr_cols);
        W_curr = W(:, 1:curr_cols);
        X_curr = X(1:curr_cols, 1:curr_cols);
    end

    if isreal(ak) && isreal(bk)

        disp('Case 1')
        
        if flag_s
            sv=-ak*Im; lv=-Im; sw=-bk*Im; lw=-Im;
            if k==1
                Sv=blkdiag(Sv,sv); Lv=[Lv lv];
                Sw=blkdiag(Sw,sw); Lw=[Lw lw];
            else
                Sv=[Sv X_curr*(Lw'*lv+(W_curr'*Bh)*(C*vk));
                    zeros(m,size(Sv,2)) sv];
                Sw=[Sw X_curr'*(Lv'*lw+(V_curr'*C')*(Bh'*wk));
                    zeros(m,size(Sw,2)) sw];
                Lv=[Lv lv]; Lw=[Lw lw];
            end
        end

        bhr=wk'*Bh; cr=C*vk;
        
        x=-(ak+bk)*inv(Im+bhr*cr);
        K=K+(E*vk)*x*bhr; Kh=Kh+cr*x*(wk'*Eh);
        
        block_size = m;
        idx_range = col_idx : col_idx + block_size - 1;
        V(:, idx_range) = vk;
        W(:, idx_range) = wk;
        X(idx_range, idx_range) = x;
        B_=B_+(E*vk)*x; C_=C_+x*(wk'*Eh);
        res=max(sqrt(eig(full(B_'*B_*C_*C_'))))/...
                                        max(sqrt(eig(full(B'*B*Ch*Ch'))));
        Residual=res
        Res=[Res res];
        col_idx = col_idx + block_size;
        k = k + 1;
        
    elseif ~isreal(ak) && ~isreal(bk)

        disp('Case 2')

        ar = real(ak); ai = imag(ak);
        br = real(bk); bi = imag(bk);
        vr = real(vk); vi = imag(vk);
        wr = real(wk); wi = imag(wk);
        
        if flag_s
            sv=kron(-[ar ai; -ai ar],Im); lv=kron([-1 0],Im);
            sw=kron(-[br bi; -bi br],Im); lw=kron([-1 0],Im);
            if k==1
                Sv=blkdiag(Sv,sv); Lv=[Lv lv];
                Sw=blkdiag(Sw,sw); Lw=[Lw lw];
            else
                Sv=[Sv X_curr*(Lw'*lv+(W_curr'*Bh)*(C*[vr,vi]));
                    zeros(2*m,size(Sv,2)) sv];
                Sw=[Sw X_curr'*(Lv'*lw+(V_curr'*C')*(Bh'*[wr,wi]));
                    zeros(2*m,size(Sw,2)) sw];
                Lv=[Lv lv]; Lw=[Lw lw];
            end
        end
        

        bhr=[wr,wi]'*Bh; cr=C*[vr,vi];

        x=compute_x_1(ar,ai,br,bi,bhr(1:m,:),...
                                bhr(m+1:2*m,:),cr(:,1:m),cr(:,m+1:2*m),Im);
        K=K+(E*[vr,vi])*x*bhr; Kh=Kh+cr*x*([wr,wi]'*Eh);

        block_size = 2 * m;
        idx_range = col_idx : col_idx + block_size - 1;
        V(:, idx_range) = [vr, vi];
        W(:, idx_range) = [wr, wi];
        X(idx_range, idx_range) = x;
        B_=B_+(E*[vr,vi])*x*[Im; 0*Im]; C_=C_+[Im,0*Im]*x*([wr,wi]'*Eh);
        res=max(sqrt(eig(full(B_'*B_*C_*C_'))))/...
                                         max(sqrt(eig(full(B'*B*Ch*Ch'))));
        Residual=res
        Res=[Res res];
        col_idx = col_idx + block_size;
        k = k + 2;

    elseif k<length(a) && ~isreal(ak) && isreal(bk) && isreal(bk1)

        disp('Case 3')

        wk1 = solve_shifted_smw(Ah', Eh', Kh', Bh', Eh'*wk, p, bk1);

        ar = real(ak); ai = imag(ak);
        vr = real(vk); vi = imag(vk);
        wr = wk; wi = wk1;
        
        if flag_s
            sv=kron(-[ar ai; -ai ar],Im); lv=kron([-1 0],Im);
            sw=kron([-bk 1; 0 -bk1],Im); lw=kron([-1 0],Im);
            if k==1
                Sv=blkdiag(Sv,sv); Lv=[Lv lv];
                Sw=blkdiag(Sw,sw); Lw=[Lw lw];
            else
                Sv=[Sv X_curr*(Lw'*lv+(W_curr'*Bh)*(C*[vr,vi]));
                    zeros(2*m,size(Sv,2)) sv];
                Sw=[Sw X_curr'*(Lv'*lw+(V_curr'*C')*(Bh'*[wr,wi]));
                    zeros(2*m,size(Sw,2)) sw];
                Lv=[Lv lv]; Lw=[Lw lw];
            end
        end
        
        bhr=[wr,wi]'*Bh; cr=C*[vr,vi];

        x=compute_x_2(ar,ai,bk,bk1,bhr(1:m,:),...
                                bhr(m+1:2*m,:),cr(:,1:m),cr(:,m+1:2*m),Im);
        K=K+(E*[vr,vi])*x*bhr; Kh=Kh+cr*x*([wr,wi]'*Eh);

        block_size = 2 * m;
        idx_range = col_idx : col_idx + block_size - 1;
        V(:, idx_range) = [vr, vi];
        W(:, idx_range) = [wr, wi];
        X(idx_range, idx_range) = x;
        B_=B_+(E*[vr,vi])*x*[Im; 0*Im]; C_=C_+[Im,0*Im]*x*([wr,wi]'*Eh);
        res=max(sqrt(eig(full(B_'*B_*C_*C_'))))/...
                                         max(sqrt(eig(full(B'*B*Ch*Ch'))));
        Residual=res
        Res=[Res res];
        col_idx = col_idx + block_size;
        k = k + 2;
    
    elseif k<length(a) && isreal(ak) && isreal(ak1) && ~isreal(bk)

        disp('Case 4')

        vk1 = solve_shifted_smw(A, E, K, C, E*vk, p, ak1);
        
        br = real(bk); bi = imag(bk);
        vr = vk; vi = vk1;
        wr = real(wk); wi = imag(wk);
        
        if flag_s
            sv=kron([-ak 1; 0 -ak1],Im); lv=kron([-1 0],Im);
            sw=kron(-[br bi; -bi br],Im); lw=kron([-1 0],Im);
            if k==1
                Sv=blkdiag(Sv,sv); Lv=[Lv lv];
                Sw=blkdiag(Sw,sw); Lw=[Lw lw];
            else
                Sv=[Sv X_curr*(Lw'*lv+(W_curr'*Bh)*(C*[vr,vi]));
                    zeros(2*m,size(Sv,2)) sv];
                Sw=[Sw X_curr'*(Lv'*lw+(V_curr'*C')*(Bh'*[wr,wi]));
                    zeros(2*m,size(Sw,2)) sw];
                Lv=[Lv lv]; Lw=[Lw lw];
            end
        end
        
        bhr=[wr,wi]'*Bh; cr=C*[vr,vi];

        x=compute_x_3(ak,ak1,br,bi,bhr(1:m,:),...
                                bhr(m+1:2*m,:),cr(:,1:m),cr(:,m+1:2*m),Im);
        K=K+(E*[vr,vi])*x*bhr; Kh=Kh+cr*x*([wr,wi]'*Eh);

        block_size = 2 * m;
        idx_range = col_idx : col_idx + block_size - 1;
        V(:, idx_range) = [vr, vi];
        W(:, idx_range) = [wr, wi];
        X(idx_range, idx_range) = x;
        B_=B_+(E*[vr,vi])*x*[Im; 0*Im]; C_=C_+[Im,0*Im]*x*([wr,wi]'*Eh);
        res=max(sqrt(eig(full(B_'*B_*C_*C_'))))/...
                                         max(sqrt(eig(full(B'*B*Ch*Ch'))));
        Residual=res
        Res=[Res res];
        col_idx = col_idx + block_size;
        k = k + 2;

    end
    
    if k>=length(a)

        if mod(c_alt,2) == 1
            if size(V_proj,2) >= rank_max
                r_idx_v=1;
            end
            actual_cols = col_idx - 1;
            last_cols = actual_cols-r_idx_v*(m);
            V_proj=V(:,last_cols+1:actual_cols);
            [vp,~]=qr(V_proj,0);
            er1=vp'*E*vp; ar1=vp'*A*vp; br1=vp'*B_;
            a_new = eig_sort(er1\ar1,er1\br1,(er1\br1)');
            a_new=(-abs(real(a_new))+sqrt(-1).*imag(a_new)).';
            if abs(imag(a_new(1))) <= 1e-8
                a_new=real(a_new(1));
            else
                a_new=[a_new(1) conj(a_new(1))];
            end
            a=[a a_new]; b=[b a_new];
            r_idx_v=r_idx_v+1;


        else
            
            if size(W_proj,2) >= rank_max
                r_idx_w=1;
            end
            actual_cols = col_idx - 1;
            last_cols = actual_cols-r_idx_w*(m);
            W_proj=W(:,last_cols+1:actual_cols);
            [wp,~]=qr(W_proj,0);
            er2=wp'*Eh*wp; ar2=wp'*Ah*wp; cr2=C_*wp;
            b_new = eig_sort(ar2/er2,(cr2/er2)',cr2/er2);
            b_new=(-abs(real(b_new))+sqrt(-1).*imag(b_new)).';
            if abs(imag(b_new(1))) <= 1e-8
                b_new=real(b_new(1));
            else
                b_new=[b_new(1) conj(b_new(1))];
            end
            b=[b b_new]; a=[a b_new];
            r_idx_w=r_idx_w+1;

        end

    end

    itr=itr+1;

end

%% Trimming
actual_cols = col_idx - 1;
V = V(:, 1:actual_cols);
W = W(:, 1:actual_cols);
X = X(1:actual_cols, 1:actual_cols);
a_used=a(1:(actual_cols)/(m));
b_used=b(1:(actual_cols)/(m));

%% Helper Functions

function vk = solve_shifted_smw(A, E, K, C, B_, p, ak)
    M0 = A + ak * E;
    RHS = [B_, K];
    XY  = M0 \ RHS;
    nB = size(B_, 2);
    Xs  = XY(:, 1:nB);
    Y  = XY(:, nB+1:end);
    S = eye(p) - C * Y;
    Ws = S \ (C * Xs);
    vk = Xs + Y * Ws;
end
    
function [x] = compute_x_1(ar,ai,br,bi,bhr,bhi,cr,ci,Im)
    s = ar + br; d_r = s^2 + ai^2 - bi^2; d_i = 2*s*bi;
    Delta_d = d_r^2 + d_i^2;
    n_1_r = -s*Im - (s*bhr - bi*bhi)*cr - ai*bhr*ci;
    n_1_i = -bi*Im - (s*bhi + bi*bhr)*cr - ai*bhi*ci;
    n_2_r =  ai*Im + ai*bhr*cr - (s*bhr - bi*bhi)*ci;
    n_2_i =  ai*bhi*cr - (s*bhi + bi*bhr)*ci;
    x1 = (n_1_r*d_r + n_1_i*d_i) / Delta_d;
    x2 = (n_2_r*d_r + n_2_i*d_i) / Delta_d;
    x3 = (n_1_i*d_r - n_1_r*d_i) / Delta_d;
    x4 = (n_2_i*d_r - n_2_r*d_i) / Delta_d;
    x = inv([x1 x2; x3 x4]);
end

function [x] = compute_x_2(ar,ai,b,b1,bhr,bhi,cr,ci,Im)
    alpha_a = ar + b; beta_a  = ar + b1; d_a = alpha_a^2 + ai^2;
    e_a = beta_a^2 + ai^2;
    x1 = -(alpha_a/d_a)*Im - (alpha_a/d_a)*(bhr * cr) - (ai/d_a)*(bhr * ci);
    x2 =  (ai/d_a)*Im + (ai/d_a)*(bhr * cr) - (alpha_a/d_a)*(bhr * ci);
    x3 = ((ai^2 - alpha_a*beta_a)/(d_a*e_a))*Im +...
                        ((ai^2 - alpha_a*beta_a)/(d_a*e_a))*(bhr * cr) ...
                         - (ai*(alpha_a+beta_a)/(d_a*e_a))*(bhr * ci) -...
                             (beta_a/e_a)*(bhi * cr) - (ai/e_a)*(bhi * ci);
    x4 = (ai*(alpha_a+beta_a)/(d_a*e_a))*Im +...
                             (ai*(alpha_a+beta_a)/(d_a*e_a))*(bhr * cr) ...
                      + ((ai^2 - alpha_a*beta_a)/(d_a*e_a))*(bhr * ci) +...
                             (ai/e_a)*(bhi * cr) - (beta_a/e_a)*(bhi * ci);
    x =inv([x1 x2; x3 x4]);
end

function [x] = compute_x_3(a,a1,br,bi,bhr,bhi,cr,ci,Im)
    alpha_b = a + br; beta_b = a1 + br; d_b = alpha_b^2 + bi^2;
    e_b = beta_b^2 + bi^2;
    x1 = -(alpha_b/d_b)*Im - (alpha_b/d_b)*(bhr*cr) - (bi/d_b)*(bhi*cr);
    x2 = ((bi^2 - alpha_b*beta_b)/(d_b*e_b))*Im + ...
                          ((bi^2 - alpha_b*beta_b)/(d_b*e_b))*(bhr*cr)- ...
       (bi*(alpha_b+beta_b)/(d_b*e_b))*(bhi*cr) -...
                                 (beta_b/e_b)*(bhr*ci) - (bi/e_b)*(bhi*ci);
    x3 =  (bi/d_b)*Im + (bi/d_b)*(bhr*cr) - (alpha_b/d_b)*(bhi*cr);
    x4 = (bi*(alpha_b+beta_b)/(d_b*e_b))*Im +...
                         (bi*(alpha_b+beta_b)/(d_b*e_b))*(bhr*cr) +...
                          ((bi^2 - alpha_b*beta_b)/(d_b*e_b))*(bhi*cr) +...
                                 (bi/e_b)*(bhr*ci) - (beta_b/e_b)*(bhi*ci);
    x=inv([x1 x2; x3 x4]);
end

function [sig] = eig_sort(A,B,C)
A=full(A); B=full(B); C=full(C);
[ve,se] = eig(A); se = diag(se); weT = inv(ve);
re=length(se); quan = zeros(re,1);

for ke = 1:re  
     if  (norm(C*ve(:,ke)) ~= 0)  &&  (norm(weT(ke,:)*B) ~= 0)
         quan(ke) = norm(C*ve(:,ke))*norm(weT(ke,:)*B)/abs(real(se(ke)));
     else
         quan(ke) = 0;
     end
end
[~,inds] = sort(quan,'descend'); sig = ( se(inds) ).';
end


end
\end{verbatim}
\section*{Appendix B: MATLAB Implementation of UN-RADI}
\begin{verbatim}
function [V,W,Tv,Tw,X,K,Kh,Res,a_used,b_used] = ...
               UN_RADI(E,A,B,C,Eh,Ah,Bh,Ch,a,b,a_in,b_in,tol,kmax,rank_max)

% Unified Low-rank ADI Algorithm for Large-scale Non-symmetric Algebraic
% Riccati Equations (NARE)
% It solves the NARE 
% A Q Eh + E Q Ah - E Q Bh C Q Eh + B Ch =0
% Q \approx V Tv X Tw' W'
% K \approx K_gain = E Q Bh
% Kh \approx Kh_gain = C Q Eh
% Author: Umair Zulfiqar
% Date: 20th April 2026

if any(real(a) >= -1e-8) || any(real(a_in) >= -1e-8)
    disp('Error: All the ADI shifts must have negative real parts')
    disp('Fix it and try again!')
    return
end


%% Initialization
[n,m]=size(B); p=size(C,1); nh=size(Bh,1); Im=eye(m);

if ~isempty(a) || ~isempty(b)
    kmax = length(a);
else
    a=a_in; b=b_in;
end

max_cols = (kmax+2) * m;
V = zeros(n, max_cols);
Tv = zeros(max_cols, max_cols);
W = zeros(nh, max_cols);
Tw = zeros(max_cols, max_cols);
V_proj=[]; W_proj=[];
X = zeros(max_cols, max_cols);
Sv = zeros(max_cols, max_cols); Lv = zeros(m, max_cols);
Sw = zeros(max_cols, max_cols); Lw = zeros(m, max_cols);
B_ = B; Bv_ = B; C_ = Ch; Cw_ = Ch;
col_idx = 1;
k = 1; r_idx_v = 1; r_idx_w = 1;
K=zeros(n,p); Kh=zeros(p,nh);
Bhr_lyap = zeros(max_cols,p); Bhr_nare = zeros(max_cols,p);
Br_lyap = zeros(max_cols,m); Br_nare = zeros(max_cols,m);
Cr_lyap  = zeros(p,max_cols); Cr_nare  = zeros(p,max_cols);
Chr_lyap  = zeros(m,max_cols); Chr_nare  = zeros(m,max_cols);
flag=1; res=1; Res=[]; itr=1; c_alt=1;
%% Iterations

while flag
    
    if itr>k
        disp('ADI Shifts are not ordered properly')
        flag=0;
        break
    end
    
    Iteration_No=itr

    if k>kmax
        disp('Maximum iteration Reached')
        flag=0;
        break
    end

    if res<tol
        disp('UN-RADI Converged')
        flag=0;
        break
    end
   
    ak = a(k); bk = b(k);

    if k<length(a)
        ak1=a(k+1); bk1=b(k+1);
    end

    gm_v=sqrt(-2*real(ak)); gm_w=sqrt(-2*real(bk));

    vk=(A+ak*E)\Bv_; wk=(Ah'+bk*Eh')\(Cw_');
    
    if isreal(ak) && isreal(bk)

        disp('Case 1')
        
        sv=-ak*Im; lv=-gm_v*Im; sw=-bk*Im; lw=-gm_w*Im;
        Bv_=Bv_+gm_v*gm_v*(E*vk); Cw_=Cw_+gm_w*gm_w*(wk'*Eh);

        block_size = m;
        idx_range = col_idx : col_idx + block_size - 1;
        
        if k==1
            Sv(idx_range,idx_range)=sv; Sw(idx_range,idx_range)=sw;
        else
            Sv(1:idx_range(end), idx_range) = [Lv(:,1:col_idx-1)'*lv; sv];
            Sw(1:idx_range(end), idx_range) = [Lw(:,1:col_idx-1)'*lw; sw];
        end
        Lv(:,idx_range)=lv; Lw(:,idx_range)=lw;
        Bhr_lyap(idx_range,:)=(gm_w*wk)'*Bh;
        Cr_lyap(:,idx_range)=C*(gm_v*vk);

        Svc=Sv(1:idx_range(end),1:idx_range(end)); I=eye(idx_range(end));
        Swc=Sw(1:idx_range(end),1:idx_range(end));
        Lvc=Lv(:,1:idx_range(end)); Lwc=Lw(:,1:idx_range(end));

        if k==1
            tv=(-Svc'+ak*I)\(Lvc');
            tw=(-Swc'+bk*I)\(Lwc');
        else
            Tvc=Tv(1:idx_range(end),1:col_idx-1);
            Twc=Tw(1:idx_range(end),1:col_idx-1);
            Xc=X(1:col_idx-1,1:col_idx-1);
            Bc_nare=Br_nare(1:col_idx-1,:);
            Bhc_nare=Bhr_nare(1:col_idx-1,:);
            Chc_nare=Chr_nare(:,1:col_idx-1);
            Cc_nare=Cr_nare(:,1:col_idx-1);
            Bhc_lyap=Bhr_lyap(1:idx_range(end),:);
            Cc_lyap=Cr_lyap(:,1:idx_range(end));
            tv=(-Svc'-Tvc*Xc*Bhc_nare*Cc_lyap+ak*I)\(Lvc'-Tvc*Bc_nare);
            tw=(-Swc'-Twc*Xc'*Cc_nare'*Bhc_lyap'+bk*I)\(Lwc'-Twc*Chc_nare');
        end
        Tv(1:idx_range(end), idx_range) = tv;
        Tw(1:idx_range(end), idx_range) = tw;

        V(:, idx_range) = gm_v*vk;
        W(:, idx_range) = gm_w*wk;
        
        v_nare=V(:,1:idx_range(end))*tv; w_nare=W(:,1:idx_range(end))*tw;
        bhr=w_nare'*Bh; cr=C*v_nare;
        Bhr_nare(idx_range,:)=bhr; Cr_nare(:,idx_range)=cr;
        x=-(ak+bk)*inv(Im+bhr*cr);
        K=K+(E*v_nare)*x*bhr; Kh=Kh+cr*x*(w_nare'*Eh);
        Br_nare(idx_range,:)=-x; Chr_nare(:,idx_range)=-x;

        
        X(idx_range, idx_range) = x;
        B_=B_+(E*v_nare)*x; C_=C_+x*(w_nare'*Eh);
        res=max(sqrt(eig(full(B_'*B_*C_*C_'))))/...
                                         max(sqrt(eig(full(B'*B*Ch*Ch'))));
        Residual=res
        Res=[Res res];
        col_idx = col_idx + block_size;
        k = k + 1;
        
    elseif ~isreal(ak) && ~isreal(bk)

        disp('Case 2')

        ar = real(ak); ai = imag(ak); del_v=ar/ai;
        bt_v=sqrt(1+del_v^2)/(2*del_v);
        br = real(bk); bi = imag(bk); del_w=br/bi;
        bt_w=sqrt(1+del_w^2)/(2*del_w);
        vr=sqrt(2)*gm_v*(real(vk)+del_v*imag(vk));
        vi=sqrt(2)*gm_v*sqrt((del_v^2)+1)*imag(vk);
        Bv_=Bv_+2*gm_v*gm_v*(E*(real(vk)+del_v*imag(vk)));
        wr=sqrt(2)*gm_w*(real(wk)+del_w*imag(wk));
        wi=sqrt(2)*gm_w*sqrt((del_w^2)+1)*imag(wk);
        Cw_=Cw_+2*gm_w*gm_w*((real(wk)+del_w*imag(wk))'*Eh);
        
        sv=kron(gm_v*gm_v*[1 bt_v; -bt_v 0],Im);
        lv=kron(-sqrt(2)*gm_v*[1 0],Im);
        sw=kron(gm_w*gm_w*[1 bt_w; -bt_w 0],Im);
        lw=kron(-sqrt(2)*gm_w*[1 0],Im);

        block_size = 2 * m;
        idx_range = col_idx : col_idx + block_size - 1;
        
        if k==1
            Sv(idx_range,idx_range)=sv; Sw(idx_range,idx_range)=sw;
        else
            Sv(1:idx_range(end), idx_range) = [Lv(:,1:col_idx-1)'*lv; sv];
            Sw(1:idx_range(end), idx_range) = [Lw(:,1:col_idx-1)'*lw; sw];
        end
        Lv(:,idx_range)=lv; Lw(:,idx_range)=lw;
        Bhr_lyap(idx_range,:)=[wr,wi]'*Bh; Cr_lyap(:,idx_range)=C*[vr,vr];

        Svc=Sv(1:idx_range(end),1:idx_range(end)); I=eye(idx_range(end));
        Swc=Sw(1:idx_range(end),1:idx_range(end));
        Lvc=Lv(:,1:idx_range(end)); Lwc=Lw(:,1:idx_range(end));

        if k==1
            tv=(-Svc'+ak*I)\(Lvc');
            tw=(-Swc'+bk*I)\(Lwc');
        else
            Tvc=Tv(1:idx_range(end),1:col_idx-1);
            Twc=Tw(1:idx_range(end),1:col_idx-1);
            Xc=X(1:col_idx-1,1:col_idx-1);
            Bc_nare=Br_nare(1:col_idx-1,:);
            Bhc_nare=Bhr_nare(1:col_idx-1,:);
            Chc_nare=Chr_nare(:,1:col_idx-1);
            Cc_nare=Cr_nare(:,1:col_idx-1);
            Bhc_lyap=Bhr_lyap(1:idx_range(end),:);
            Cc_lyap=Cr_lyap(:,1:idx_range(end));
            tv=(-Svc'-Tvc*Xc*Bhc_nare*Cc_lyap+ak*I)\(Lvc'-Tvc*Bc_nare);
            tw=(-Swc'-Twc*Xc'*Cc_nare'*Bhc_lyap'+bk*I)\(Lwc'-Twc*Chc_nare');
        end
        tvr=real(tv); tvi=imag(tv); twr=real(tw); twi=imag(tw);
        Tv(1:idx_range(end), idx_range) = [tvr,tvi];
        Tw(1:idx_range(end), idx_range) = [twr,twi];

        V(:, idx_range) = [vr, vi];
        W(:, idx_range) = [wr, wi];
        
        v_nare=V(:,1:idx_range(end))*[tvr,tvi];
        w_nare=W(:,1:idx_range(end))*[twr,twi];
        bhr=w_nare'*Bh; cr=C*v_nare;
        Bhr_nare(idx_range,:)=bhr; Cr_nare(:,idx_range)=cr;
        x = compute_x_1(ar,ai,br,bi,bhr(1:m,:),...
                                bhr(m+1:2*m,:),cr(:,1:m),cr(:,m+1:2*m),Im);
        K=K+(E*v_nare)*x*bhr; Kh=Kh+cr*x*(w_nare'*Eh);
        Br_nare(idx_range,:)=-x*[Im 0*Im]';
        Chr_nare(:,idx_range)=-[Im 0*Im]*x;

        
        X(idx_range, idx_range) = x;
        B_=B_+(E*v_nare)*x*[Im; 0*Im]; C_=C_+[Im,0*Im]*x*(w_nare'*Eh);
        res=max(sqrt(eig(full(B_'*B_*C_*C_'))))/...
                                         max(sqrt(eig(full(B'*B*Ch*Ch'))));
        Residual=res
        Res=[Res res];
        col_idx = col_idx + block_size;
        k = k + 2;

    elseif k<length(a) && ~isreal(ak) && isreal(bk) && isreal(bk1)

        disp('Case 3')

        sw=-bk*Im; lw=-gm_w*Im; Cw_=Cw_+gm_w*gm_w*(wk'*Eh);

        block_size = m;
        idx_range = col_idx : col_idx + block_size - 1;
        idx_range_v = col_idx : col_idx + (2*block_size) - 1;

        if k==1
            Sw(idx_range,idx_range)=sw;
        else
            Sw(1:idx_range(end), idx_range) = [Lw(:,1:col_idx-1)'*lw; sw];
        end
        Lw(:,idx_range)=lw;
        Bhr_lyap(idx_range,:)=(gm_w*wk)'*Bh;
        
        gm_w1=sqrt(-2*real(bk1)); sw=-bk1*Im; lw=-gm_w1*Im;
        wk1=(Ah'+bk1*Eh')\(Cw_'); Cw_=Cw_+gm_w1*gm_w1*(wk1'*Eh);

        block_size = m;
        idx_range = col_idx + block_size: col_idx + (2*block_size) - 1;
        
        Sw(1:idx_range(end), idx_range) = [Lw(:,1:(col_idx+m)-1)'*lw; sw];
        Lw(:,idx_range)=lw; Bhr_lyap(idx_range,:)=(gm_w1*wk1)'*Bh;

        ar = real(ak); ai = imag(ak); del_v=ar/ai;
        bt_v=sqrt(1+del_v^2)/(2*del_v);
        vr=sqrt(2)*gm_v*(real(vk)+del_v*imag(vk));
        vi=sqrt(2)*gm_v*sqrt((del_v^2)+1)*imag(vk);
        Bv_=Bv_+2*gm_v*gm_v*(E*(real(vk)+del_v*imag(vk)));

        sv=kron(gm_v*gm_v*[1 bt_v; -bt_v 0],Im);
        lv=kron(-sqrt(2)*gm_v*[1 0],Im);
        
        if k==1
            Sv(idx_range_v,idx_range_v)=sv;
        else
            Sv(1:idx_range_v(end), idx_range_v) = [Lv(:,1:col_idx-1)'*lv; sv];
        end
        Lv(:,idx_range_v)=lv; Cr_lyap(:,idx_range_v)=C*[vr,vi];

        Svc=Sv(1:idx_range(end),1:idx_range(end)); I=eye(idx_range(end));
        Swc=Sw(1:idx_range(end),1:idx_range(end));
        Lvc=Lv(:,1:idx_range(end)); Lwc=Lw(:,1:idx_range(end));

        if k==1
            tv=(-Svc'+ak*I)\(Lvc');
            tw=(-Swc'+bk*I)\(Lwc');
            tw1=(-Swc'+bk1*I)\tw;
        else
            Tvc=Tv(1:idx_range(end),1:col_idx-1);
            Twc=Tw(1:idx_range(end),1:col_idx-1);
            Xc=X(1:col_idx-1,1:col_idx-1);
            Bc_nare=Br_nare(1:col_idx-1,:);
            Bhc_nare=Bhr_nare(1:col_idx-1,:);
            Chc_nare=Chr_nare(:,1:col_idx-1);
            Cc_nare=Cr_nare(:,1:col_idx-1);
            Bhc_lyap=Bhr_lyap(1:idx_range(end),:);
            Cc_lyap=Cr_lyap(:,1:idx_range(end));
            tv=(-Svc'-Tvc*Xc*Bhc_nare*Cc_lyap+ak*I)\(Lvc'-Tvc*Bc_nare);
            tw=(-Swc'-Twc*Xc'*Cc_nare'*Bhc_lyap'+bk*I)\(Lwc'-Twc*Chc_nare');
            tw1=(-Swc'-Twc*Xc'*Cc_nare'*Bhc_lyap'+bk1*I)\tw;        
        end
        
        tvr=real(tv); tvi=imag(tv); twr=tw; twi=tw1;
        Tv(1:idx_range(end), idx_range_v) = [tvr,tvi];
        Tw(1:idx_range(end), idx_range_v) = [twr,twi];

        V(:, idx_range_v) = [vr, vi];
        W(:, idx_range_v) = [gm_w*wk, gm_w1*wk1];
        
        v_nare=V(:,1:idx_range(end))*[tvr,tvi];
        w_nare=W(:,1:idx_range(end))*[twr,twi];
        
        bhr=w_nare'*Bh; cr=C*v_nare;
        Bhr_nare(idx_range_v,:)=bhr; Cr_nare(:,idx_range_v)=cr;
        x = compute_x_2(ar,ai,bk,bk1,bhr(1:m,:),...
                                bhr(m+1:2*m,:),cr(:,1:m),cr(:,m+1:2*m),Im);
        K=K+(E*v_nare)*x*bhr; Kh=Kh+cr*x*(w_nare'*Eh);
        Br_nare(idx_range_v,:)=-x*[Im 0*Im]';
        Chr_nare(:,idx_range_v)=-[Im 0*Im]*x;

        X(idx_range_v, idx_range_v) = x;
        B_=B_+(E*v_nare)*x*[Im; 0*Im]; C_=C_+[Im,0*Im]*x*(w_nare'*Eh);
        res=max(sqrt(eig(full(B_'*B_*C_*C_'))))/...
                                         max(sqrt(eig(full(B'*B*Ch*Ch'))));
        Residual=res
        Res=[Res res];
        col_idx = col_idx + 2*block_size;
        k = k + 2;
    
    elseif k<length(a) && isreal(ak) && isreal(ak1) && ~isreal(bk)

        disp('Case 4')

        sv=-ak*Im; lv=-gm_v*Im; Bv_=Bv_+gm_v*gm_v*(E*vk);

        block_size = m;
        idx_range = col_idx : col_idx + block_size - 1;
        idx_range_w = col_idx : col_idx + (2*block_size) - 1;
        
        if k==1
            Sv(idx_range,idx_range)=sv;
        else
            Sv(1:idx_range(end), idx_range) = [Lv(:,1:col_idx-1)'*lv; sv];
        end
        Lv(:,idx_range)=lv; Cr_lyap(:,idx_range)=C*(gm_v*vk);

        gm_v1=sqrt(-2*real(ak1)); sv=-ak1*Im; lv=-gm_v1*Im;
        vk1=(A+ak1*E)\Bv_; Bv_=Bv_+gm_v1*gm_v1*(E*vk1);

        block_size = m;
        idx_range = col_idx+m : col_idx + (2*block_size) - 1;
        
        Sv(1:idx_range(end), idx_range) = [Lv(:,1:(col_idx+m)-1)'*lv; sv];
        Lv(:,idx_range)=lv; Cr_lyap(:,idx_range)=C*(gm_v1*vk1);

        br = real(bk); bi = imag(bk); del_w=br/bi;
        bt_w=sqrt(1+del_w^2)/(2*del_w);
        wr=sqrt(2)*gm_w*(real(wk)+del_w*imag(wk));
        wi=sqrt(2)*gm_w*sqrt((del_w^2)+1)*imag(wk);
        Cw_=Cw_+2*gm_w*gm_w*((real(wk)+del_w*imag(wk))'*Eh);
        
        sw=kron(gm_w*gm_w*[1 bt_w; -bt_w 0],Im);
        lw=kron(-sqrt(2)*gm_w*[1 0],Im);

        if k==1
            Sw(idx_range_w,idx_range_w)=sw;
        else
            Sw(1:idx_range(end), idx_range_w) = [Lw(:,1:col_idx-1)'*lw; sw];
        end
        Lw(:,idx_range_w)=lw; Bhr_lyap(idx_range_w,:)=[wr,wi]'*Bh;

        Svc=Sv(1:idx_range(end),1:idx_range(end)); I=eye(idx_range(end));
        Swc=Sw(1:idx_range(end),1:idx_range(end));
        Lvc=Lv(:,1:idx_range(end)); Lwc=Lw(:,1:idx_range(end));

        if k==1
            tv=(-Svc'+ak*I)\(Lvc');
            tv1=(-Svc'+ak1*I)\tv;
            tw=(-Swc'+bk*I)\(Lwc');
        else
            Tvc=Tv(1:idx_range(end),1:col_idx-1);
            Twc=Tw(1:idx_range(end),1:col_idx-1);
            Xc=X(1:col_idx-1,1:col_idx-1);
            Bc_nare=Br_nare(1:col_idx-1,:);
            Bhc_nare=Bhr_nare(1:col_idx-1,:);
            Chc_nare=Chr_nare(:,1:col_idx-1);
            Cc_nare=Cr_nare(:,1:col_idx-1);
            Bhc_lyap=Bhr_lyap(1:idx_range(end),:);
            Cc_lyap=Cr_lyap(:,1:idx_range(end));
            tv=(-Svc'-Tvc*Xc*Bhc_nare*Cc_lyap+ak*I)\(Lvc'-Tvc*Bc_nare);
            tv1=(-Svc'-Tvc*Xc*Bhc_nare*Cc_lyap+ak1*I)\tv;
            tw=(-Swc'-Twc*Xc'*Cc_nare'*Bhc_lyap'+bk*I)\(Lwc'-Twc*Chc_nare');
        end
      
        tvr=tv; tvi=tv1; twr=real(tw); twi=imag(tw);
        Tv(1:idx_range(end), idx_range_w) = [tvr,tvi];
        Tw(1:idx_range(end), idx_range_w) = [twr,twi];

        V(:, idx_range_w) = [gm_v*vk, gm_v1*vk1];
        W(:, idx_range_w) = [wr, wi];
        
        v_nare=V(:,1:idx_range(end))*[tvr,tvi];
        w_nare=W(:,1:idx_range(end))*[twr,twi];

        bhr=w_nare'*Bh; cr=C*v_nare;
        Bhr_nare(idx_range_w,:)=bhr; Cr_nare(:,idx_range_w)=cr;
        x = compute_x_3(ak,ak1,br,bi,bhr(1:m,:),...
                                bhr(m+1:2*m,:),cr(:,1:m),cr(:,m+1:2*m),Im);
        K=K+(E*v_nare)*x*bhr; Kh=Kh+cr*x*(w_nare'*Eh);
        Br_nare(idx_range_w,:)=-x*[Im 0*Im]';
        Chr_nare(:,idx_range_w)=-[Im 0*Im]*x;
        
        X(idx_range_w, idx_range_w) = x;
        B_=B_+(E*v_nare)*x*[Im; 0*Im]; C_=C_+[Im,0*Im]*x*(w_nare'*Eh);
        res=max(sqrt(eig(full(B_'*B_*C_*C_'))))/...
                                        max(sqrt(eig(full(B'*B*Ch*Ch'))));
        Residual=res
        Res=[Res res];
        col_idx = col_idx + 2*block_size;
        k = k + 2;

    end
    
    if k>=length(a)

        if mod(c_alt,2) == 1
            if size(V_proj,2) >= rank_max
                r_idx_v=1;
            end
            actual_cols = col_idx - 1;
            last_cols = actual_cols-r_idx_v*(m);
            V_proj=V(:,1:actual_cols)*Tv(1:actual_cols,...
                                                  last_cols+1:actual_cols);
            [vp,~]=qr(V_proj,0);
            er1=vp'*E*vp; ar1=vp'*A*vp; br1=vp'*B_;
            a_new = eig_sort(er1\ar1,er1\br1,(er1\br1)');
            a_new=(-abs(real(a_new))+sqrt(-1).*imag(a_new)).';
            if abs(imag(a_new(1))) <= 1e-8
                a_new=real(a_new(1));
            else
                a_new=[a_new(1) conj(a_new(1))];
            end
            a=[a a_new]; b=[b a_new];
            r_idx_v=r_idx_v+1;


        else
            
            if size(W_proj,2) >= rank_max
                r_idx_w=1;
            end
            actual_cols = col_idx - 1;
            last_cols = actual_cols-r_idx_w*(m);
            W_proj=W(:,1:actual_cols)*Tw(1:actual_cols,...
                                                  last_cols+1:actual_cols);
            [wp,~]=qr(W_proj,0);
            er2=wp'*Eh*wp; ar2=wp'*Ah*wp; cr2=C_*wp;
            b_new = eig_sort(ar2/er2,(cr2/er2)',cr2/er2);
            b_new=(-abs(real(b_new))+sqrt(-1).*imag(b_new)).';
            if abs(imag(b_new(1))) <= 1e-8
                b_new=real(b_new(1));
            else
                b_new=[b_new(1) conj(b_new(1))];
            end
            b=[b b_new]; a=[a b_new];
            r_idx_w=r_idx_w+1;

        end

    end

    itr=itr+1;

end

%% Trimming
actual_cols = col_idx - 1;
V = V(:, 1:actual_cols);
W = W(:, 1:actual_cols);
Tv = Tv(1:actual_cols, 1:actual_cols);
Tw = Tw(1:actual_cols, 1:actual_cols);
X = X(1:actual_cols, 1:actual_cols);
a_used=a(1:(actual_cols)/(m));
b_used=b(1:(actual_cols)/(m));

%% Helper Functions

function [x] = compute_x_1(ar,ai,br,bi,bhr,bhi,cr,ci,Im)
    s = ar + br; d_r = s^2 + ai^2 - bi^2; d_i = 2*s*bi;
    Delta_d = d_r^2 + d_i^2;
    n_1_r = -s*Im - (s*bhr - bi*bhi)*cr - ai*bhr*ci;
    n_1_i = -bi*Im - (s*bhi + bi*bhr)*cr - ai*bhi*ci;
    n_2_r =  ai*Im + ai*bhr*cr - (s*bhr - bi*bhi)*ci;
    n_2_i =  ai*bhi*cr - (s*bhi + bi*bhr)*ci;
    x1 = (n_1_r*d_r + n_1_i*d_i) / Delta_d;
    x2 = (n_2_r*d_r + n_2_i*d_i) / Delta_d;
    x3 = (n_1_i*d_r - n_1_r*d_i) / Delta_d;
    x4 = (n_2_i*d_r - n_2_r*d_i) / Delta_d;
    x = inv([x1 x2; x3 x4]);
end

function [x] = compute_x_2(ar,ai,b,b1,bhr,bhi,cr,ci,Im)
    alpha_a = ar + b; beta_a  = ar + b1; d_a = alpha_a^2 + ai^2;
    e_a = beta_a^2 + ai^2;
    x1 = -(alpha_a/d_a)*Im - (alpha_a/d_a)*(bhr * cr) - (ai/d_a)*(bhr * ci);
    x2 =  (ai/d_a)*Im + (ai/d_a)*(bhr * cr) - (alpha_a/d_a)*(bhr * ci);
    x3 = ((ai^2 - alpha_a*beta_a)/(d_a*e_a))*Im +...
                        ((ai^2 - alpha_a*beta_a)/(d_a*e_a))*(bhr * cr) ...
                         - (ai*(alpha_a+beta_a)/(d_a*e_a))*(bhr * ci) -...
                             (beta_a/e_a)*(bhi * cr) - (ai/e_a)*(bhi * ci);
    x4 = (ai*(alpha_a+beta_a)/(d_a*e_a))*Im +...
                             (ai*(alpha_a+beta_a)/(d_a*e_a))*(bhr * cr) ...
                      + ((ai^2 - alpha_a*beta_a)/(d_a*e_a))*(bhr * ci) +...
                             (ai/e_a)*(bhi * cr) - (beta_a/e_a)*(bhi * ci);
    x =inv([x1 x2; x3 x4]);
end

function [x] = compute_x_3(a,a1,br,bi,bhr,bhi,cr,ci,Im)
    alpha_b = a + br; beta_b = a1 + br; d_b = alpha_b^2 + bi^2;
    e_b = beta_b^2 + bi^2;
    x1 = -(alpha_b/d_b)*Im - (alpha_b/d_b)*(bhr*cr) - (bi/d_b)*(bhi*cr);
    x2 = ((bi^2 - alpha_b*beta_b)/(d_b*e_b))*Im + ...
                          ((bi^2 - alpha_b*beta_b)/(d_b*e_b))*(bhr*cr)- ...
       (bi*(alpha_b+beta_b)/(d_b*e_b))*(bhi*cr) -...
                                 (beta_b/e_b)*(bhr*ci) - (bi/e_b)*(bhi*ci);
    x3 =  (bi/d_b)*Im + (bi/d_b)*(bhr*cr) - (alpha_b/d_b)*(bhi*cr);
    x4 = (bi*(alpha_b+beta_b)/(d_b*e_b))*Im +...
                         (bi*(alpha_b+beta_b)/(d_b*e_b))*(bhr*cr) +...
                          ((bi^2 - alpha_b*beta_b)/(d_b*e_b))*(bhi*cr) +...
                                 (bi/e_b)*(bhr*ci) - (beta_b/e_b)*(bhi*ci);
    x=inv([x1 x2; x3 x4]);
end

function [sig] = eig_sort(A,B,C)
A=full(A); B=full(B); C=full(C);
[ve,se] = eig(A); se = diag(se); weT = inv(ve);
re=length(se); quan = zeros(re,1);

for ke = 1:re  
     if  (norm(C*ve(:,ke)) ~= 0)  &&  (norm(weT(ke,:)*B) ~= 0)
         quan(ke) = norm(C*ve(:,ke))*norm(weT(ke,:)*B)/abs(real(se(ke)));
     else
         quan(ke) = 0;
     end
end
[~,inds] = sort(quan,'descend'); sig = ( se(inds) ).';
end

end
\end{verbatim}
%\bibliography{mybibfile}

\end{document}